\newtheorem{theorem}{Theorem}
\theoremstyle{plain}
\newtheorem{claim}[theorem]{Claim}
\newtheorem{corollary}[theorem]{Corollary}
\newtheorem{definition}[theorem]{Definition}
\newtheorem{lemma}[theorem]{Lemma}
\newtheorem{proposition}[theorem]{Proposition}
\newenvironment{claimproof}{\noindent{\em Proof of the claim.}}{\qedclaim}
\newcommand{\qedclaim}{\hfill $\diamond$ \medskip}
\numberwithin{equation}{section}
\begin{document}
\title[Centroidal localization game]{Centroidal localization game}
\author[B. Bosek]{Bart\l omiej Bosek}
\address{Theoretical Computer Science Department, Faculty of Mathematics and Computer Science, Jagiellonian
University, 30-348 Krak\'{o}w, Poland}
\email{bosek@tcs.uj.edu.pl}

\author[P. Gordinowicz] {Przemys\l aw Gordinowicz}
\address{Institute of Mathematics, Lodz University of Technology, \L{}\'od\'z, Poland}
\email{pgordin@p.lodz.pl}

\author[J. Grytczuk]{Jaros\l aw Grytczuk}
\address{Faculty of Mathematics and Information Science, Warsaw University
	of Technology, 00-662 Warsaw, Poland}
\email{j.grytczuk@mini.pw.edu.pl}

\author[N; Nisse]{Nicolas Nisse}
\address{Universit\'e C\^ote d'Azur, Inria, CNRS, I3S, France}
\email{nicolas.nisse@inria.fr}

\author[J. Sok\'{o}\l ]{Joanna Sok\'{o}\l }
\address{Faculty of Mathematics and Information Science, Warsaw University
	of Technology, 00-662 Warsaw, Poland}
\email{j.sokol@mini.pw.edu.pl}
\author[M. \'{S}leszy\'{n}ska-Nowak]{Ma\l gorzata \'{S}leszy\'{n}ska-Nowak}

\address{Faculty of Mathematics and Information Science, Warsaw University
	of Technology, 00-662 Warsaw, Poland}
\email{m.sleszynska@mini.pw.edu.pl}

\thanks{This work was supported by The National Center for Research and
	Development under the project PBS2/B3/24/2014, by the ANR project Stint (ANR-13-BS02-0007) (N. Nisse) and the associated Inria team AlDyNet (N. Nisse)}

\begin{abstract}
One important problem in a network $G$ is to locate an (invisible) moving entity by using distance-detectors placed at strategical locations in $G$. For instance, the famous {\it metric dimension} of a graph $G$ is the minimum number $k$ of detectors placed in some vertices $\{v_1,\cdots,v_k\}$ such that the vector $(d_1,\cdots,d_k)$ of the distances $d(v_i,r)$ between the detectors and the entity's location $r$ allows to uniquely determine $r$ for every $r \in V(G)$. In a more realistic setting, each device does not get the exact distance to the entity's location. Rather, given locating devices placed in $\{v_1,\cdots,v_k\}$, we get only relative distances between the moving entity's location $r$ and the devices (roughly, for every $1\leq i,j\leq k$, it is provided whether $d(v_i,r) >$, $<$, or $=$ to $d(v_j,r)$). The {\it centroidal dimension} of a graph $G$ is the minimum number of devices required to locate the entity, in one step, in this setting.

In this paper, we consider the natural generalization of the latter problem, where vertices may be probed sequentially (i.e., in several steps) until the moving entity is located. Roughly, at every turn, a set $\{v_1,\cdots,v_k\}$ of vertices are probed and then the relative order of the distances between the vertices $v_i$ and the current location $r$ of the moving entity is given. If it not located, the moving entity may move along one edge. Let $\zeta^* (G)$ be the minimum $k$ such that the entity is eventually located, whatever it does, in the graph $G$. 

We first prove that $\zeta^* (T)\leq 2$ for every tree $T$ and give an upper bound on $\zeta^*(G\square H)$ for the cartesian product of graphs $G$ and $H$. Our main result is that $\zeta^* (G)\leq 3$ for any outerplanar graph $G$. We then prove that $\zeta^* (G)$ is bounded by the pathwidth of $G$ plus 1 and that the optimization problem of determining $\zeta^* (G)$ is NP-hard in general graphs.
Finally, we show that approximating (up to a small constant distance) the location of the robber in the Euclidean plane requires at most two vertices per turn.

\end{abstract}

\maketitle

\section{Introduction}

The problem of locating or capturing an intruder in a graph has been widely studied using many different approaches. One approach is to place detection devices at some vertices of the graph such that these devices precisely determine the position of the intruder at any moment and wherever it is. For instance, this is the approach taken by identifying codes, metric bases and centroidal bases.

Recall that a {\it dominating set} $D \subseteq V$ of a graph $G=(V,E)$ is a set such that any vertex of $V$ is in the closed neighborhood of some vertex of $D$.  That is, $V= \bigcup_{v \in D} N[v]$, where $N(v)=\{u \in V \mid \{u,v\} \in E\}$ and $N[v]=N(v) \cup \{v\}$ for any $v\in V$. A vertex $u$ {\it separates} two vertices $v$ and $w$ if it is in the closed neighborhood of exactly one of them. A set $D \subseteq V$ separates the vertices of a set $X$ if, for every two vertices $v,w\in X$, there exists $u\in D$ which separates them. A set $D$ is an \emph{identifying code} of a graph $G$ if it is dominating and separates all vertices from $V(G)$~\cite{Karp}. Similarly, Slater defined the notion of \emph{locating-dominating} set, that is a dominating set $D$ separating vertices of $V(G) \setminus D$~\cite{Slater1,Slater2} (see~\cite{Lobstein} and the references therein). Both previous approaches model the situation when the detection devices can detect an intruder at distance at most one from them.

The case when the devices have a longer range of detection has also been considered. For instance, Slater considered the case of an infinite range of detection. Precisely, a  \emph{locating set} is a vertex set $L=\{w_1,...,w_k\}\subseteq V(G)$ such that, for each vertex $v\in V(G)$, the ordered $k$-tuple $(d(v,w_1),d(v,w_2),...,d(v,w_k))$ of distances between the detectors and the intruder vertex $v$ is unique~\cite{Slater,Harary} (i.e., the vector of distances allows to determine uniquely the vertex $v$). The minimum cardinality of a locating set is called \emph{metric dimension} of a graph, denoted by $\operatorname{MD}(G)$, and a locating set of minimal cardinality is called a \emph{metric basis} of $G$. For more details on the complexity of computing the metric dimension and on bounds on metric dimension of various graph classes, see for instance~\cite{FMR+17a,FMR+17b}. 

The concept of {\it centroidal bases} (see eg.~\cite{Foucaud}) is similar to the one of metric bases. Once again, detecting devices placed at the vertices $C=\{c_{1},\ldots, c_{k}\}\subseteq V(G)$ are assume to have have unlimited range, but they do not determine the exact distance from the intruder. Instead, the devices report in an order: if $c_i$ is closer to the intruder than $c_j$, then $c_i$ reports before $c_j$ and, if they are at the same distance from the intruder, then they report simultaneously. In other words, the received information is an ordered partition of $C$ (ordered by nondecreasing distance from the intruder and ties noted). If, for any intruder position $v\in V(G)$, the received information allows us to uniquely determine $v$, then $C$ is called a \emph{centroidal locating set}. If $C$ is a centroidal locating set of minimal cardinality, then it is said to be a \emph{centroidal basis} of $G$, and its cardinality is called the \emph{centroidal dimension} of $G$, denoted by $\operatorname{CD}(G)$. For instance, it is known that, for any $n$-node graph $G$ with maximum degree at least $2$, then $(1+o(1)) \frac{\ln n}{\ln \ln n} \leq \operatorname{CD}(G) \leq n-1$~\cite{Foucaud}. We note that additional information, whether for some checked vertex the distance from the intruder is 0, allows to remove the degree restriction.

All above mentioned models aim at locating the intruder at any moment of time or, equivalently, at any turn. Another approach consists in locating it in a finite number of turns. This is in the vein of the famous Cops and Robber games where a team of cops must capture a (generally) visible robber by moving alternately in a graph (e.g., see the book~\cite{BonatoNowakowski}).

The {\it localization game}~\cite{Seager1,LocalizationGame,Haslegrave} somehow generalizes the notion of metric dimension of a graph by allowing to probe several sets (of bounded size) instead of only one. Of course, between any two probes, the intruder may move (since otherwise, it would be sufficient to check every vertex one by one).  Precisely, the localization game is defined as follows. Let $G=(V,E)$ be a simple undirected graph and let $k\geq 1$ be a
fixed integer. Two players, the {\it Cop-player} and the {\it Robber-player} (the {\it robber}), play alternately as follows. In the first turn, the robber chooses a vertex $r \in V$ but keeps it in a secret. Then, at every turn, first the Cop-player picks (or probes) $k$ vertices $B=\{v_{1},v_{2},\ldots ,v_{k}\} \in V^k$ and, in return, gets the vector $D(B)=(d_{1},d_{2},\ldots ,d_{k})$ where $d_{i}=d_{G}(r,v_{i})$ is the distance (in $G$) from $r$ to $v_i$ for every $i=1,2,\ldots ,k$. If the location of the robber is uniquely identified thanks to this information, the game ends with the victory of the Cop-player. Otherwise, the robber may move along one edge. The robber wins if its location is never known. Let the \emph{localization number} of $G$, denoted by $\zeta (G)$, be the least integer $k$ for which the cops have a winning strategy whatever be the strategy of the robber.

This game restricted to $k=1$ has been introduced by Seager \cite{Seager1}, and studied further in \cite{Brandt,WestTCS,Seager2}. The parameter $\zeta (G)$ can be seen as the game theoretic variant of $\operatorname{MD}(G)$ and, by the definition, $\zeta(G) \leq \operatorname{MD}(G)$ in any graph $G$.
The localization game with many cops has been introduced recently by the authors of this paper in~\cite{LocalizationGame} and, independently, in~\cite{Haslegrave}. For instance, in~\cite{Haslegrave}, it was shown that $\zeta(G) \leq \lfloor \frac{(\Delta+1)^2}{4}\rfloor+1$ for any graph $G$ with maximum degree $\Delta$. The main result in~\cite{LocalizationGame} is that $\zeta(G)$ is unbounded in the class of planar graphs (more precisely, in the class of graphs obtained from a tree by adding a universal vertex). Moreover, computing $\zeta(G)$ is NP-hard~\cite{LocalizationGame}.

\medskip
The goal of this paper is to propose and study a ``generalization" of the centroidal dimension, in the same way as the localization game somehow extends the notion of metric dimension. This is inspired by localization problems in wireless networks (such as the network of Wi-Fi access points). We are interested in locating a person with a mobile device, who may walk along the network changing his position in time. The strength of the signal from the mobile device is proportional to its distance to particular access points. Unfortunately the signal may be easily disturbed by various factors, hence its strength depends highly on the circumstances. On the other hand, the relative order of the strengths of two signals is expected to be invariant regardless of the circumstances.

The \emph{centroidal localization game} is a turn-by-turn 2-Player game that proceeds as follows. First an invisible robber is placed at some vertex $r$. Then, at every turn, first the Cop-player probes a set $\{v_1,\cdots,v_k\}$ of $k$ vertices.  In return, the Cop-player receives, for any $1\leq i<j\leq k$, the information whether $d(v_i, r) = 0$ or $d(v_i,r)=d(r,v_j)$ or $d(v_i,r)<d(r,v_j)$ or $d(v_i,r)>d(r,v_j)$. If the location of the robber is uniquely identified thanks to this information, the game ends with the victory of the Cop-player. Otherwise, the robber may move along one edge. The robber wins if its location is never known. Note that it is not necessary for the Cop-player to probe the vertex occupied by the robber to win. 

The {\it centroidal localization number} of $G$, denoted by $\zeta^*(G)$, is the minimum $k$ that ensures the victory for the Cop-player whatever be the strategy of the robber. Note that $\zeta^*(G)$ may be viewed as game-theoretical version of the centroidal dimension, and the inequality $\zeta^*(G) \le \operatorname{CD}(G)$ obviously holds for every graph $G$. Note also that $\zeta(G) \le \zeta^*(G)$ holds by definition. Hence, by~\cite{LocalizationGame}, $\zeta^*(G)$ is unbounded on planar graphs (even in the class of graphs obtained from a tree by adding a universal vertex), while any upper bound proven in this paper holds for $\zeta(G)$ as well. It is also interesting to note that, some results obtained in the context of the localization game, have been proved without using the exact distances but only their relative order. For this reason, from the proof of~\cite{Haslegrave}, it is possible to directly derive that
\begin{corollary} \label{thm:Delta}
For any graph $G$ with maximum degree $\Delta$,  
$\zeta^*(G) \leq \lfloor \frac{(\Delta+1)^2}{4}\rfloor+1.$ 
\end{corollary}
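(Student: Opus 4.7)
The plan is to verify that Haslegrave's winning strategy from~\cite{Haslegrave} for the ordinary localization game is, in fact, a centroidal strategy in disguise: every distinguishing step it performs relies only on pairwise comparisons between the returned distances (together with the ability to detect $d(v_i,r)=0$), and never on the exact numerical value of a distance. Since the centroidal response encodes precisely these comparisons, the same Cop-player strategy, using the same $\lfloor(\Delta+1)^2/4\rfloor+1$ probes per turn, wins the centroidal localization game, and the bound transfers.

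Concretely, I would first recall the shape of Haslegrave's argument. At each turn the Cop-player maintains the set $S\subseteq V(G)$ of robber positions consistent with all past responses; he reserves one probe to test whether $d(r,u)=0$ for a well-chosen candidate $u\in S$, and uses the remaining $\lfloor(\Delta+1)^2/4\rfloor$ probes to guarantee that, even after the robber's subsequent move along an edge, the updated candidate set is strictly smaller than $S$. Since $S$ is finite and this strict decrease is maintained, the game must terminate with capture. The heart of the argument is a local separation lemma: around any vertex $v$ of degree at most $\Delta$, a suitably placed set of $\lfloor(\Delta+1)^2/4\rfloor+1$ probes yields pairwise distinct responses on $N[v]$.

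The main, and essentially the only, obstacle is to re-inspect this separation lemma and check that ``distinct responses'' already means ``distinct ordered partitions of the probe set by distance to the candidate''. Typically one exhibits, for two candidates $x\neq y$ in $N[v]$, a probe $w_1$ strictly closer to one of them and another probe $w_2$ at an intermediate distance, which translates directly into a sign change in the centroidal comparison $d(w_1,r)\lessgtr d(w_2,r)$; hence the separation is already visible from the centroidal signal. One must go through Haslegrave's case analysis and confirm that no step covertly uses the parity or the exact magnitude of a distance to rule out a candidate. Once this verification is complete---noting that the $d(v_i,r)=0$ detection is explicitly built into our response model---the inequality $\zeta^*(G)\leq\lfloor(\Delta+1)^2/4\rfloor+1$ follows without any further modification of the proof.
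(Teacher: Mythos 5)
Your proposal matches the paper's argument exactly: the paper derives this corollary by observing that Haslegrave's strategy for the ordinary localization game uses only the relative order of the returned distances (together with zero-distance detection), so it transfers verbatim to the centroidal setting. Your write-up is, if anything, more explicit than the paper about what needs to be re-checked in Haslegrave's case analysis.
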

 
\smallskip

\noindent {\bf Our results.} As a warm-up, we give easy results on the centroidal localization number (Section~\ref{sec:warmup}). Then, we show that $\zeta^*(T) \le 2$ for any tree $T$ (Section~\ref{sec:tree}) and we provide an upper bound on $\zeta^*(G\square H)$  (where $\square$ denotes the cartesian product) in Section~\ref{sec:product}. 
Our main result is that 
 $\zeta^* (G) \leq 3$ for any outerplanar graph $G$ (Section~\ref{sec:outer}), which also gives the best known bound for $\zeta(G)$ in this class of graphs. Then we show that deciding whether $\zeta^* (G) \leq k$ is NP-hard in the class of graphs $G$ with diameter $2$ (Section~\ref{sec:complexity}). Finally, we show that approximating (up to a small constant distance) the location of the robber in the Euclidean plane requires to probe at most two vertices per turns (Section~\ref{sec:euclidean}). In the final section (Section~\ref{sec:conclusion}) we set several open problems for future research.
 
\section{Warm-up}\label{sec:warmup}

Let us start with very simple observations. 

\begin{proposition}
$\zeta^*(G)=1$ if and only if $G$ is a graph with at most one edge.
\end{proposition}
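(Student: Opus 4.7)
The plan is to handle the two implications separately, using the fact that with $k=1$ the Cop-player learns in each turn only whether $r=v_1$ (the condition $d(v_1,r)=0$ in the game definition).

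For $(\Leftarrow)$, I would assume $|E(G)|\leq 1$ and exhibit a winning one-probe strategy. If $E(G)=\emptyset$ the robber cannot move, so probing the vertices in turn eventually hits it. If $E(G)=\{uv\}$, every other vertex is isolated, so the cop probes the isolated vertices first (any of them catches the robber immediately if it started there). After these probes, the robber must lie on $\{u,v\}$; then probing $u$ either hits it, or the combined information certifies $r=v$.

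For $(\Rightarrow)$, I would assume $|E(G)|\geq 2$ and construct a winning robber strategy. Let $V'$ be the set of non-isolated vertices. Two distinct edges force $|V'|\geq 3$ (they either share a vertex, producing three non-isolated vertices, or are disjoint, producing four), and every $G$-neighbour of a vertex of $V'$ is itself in $V'$. I would track the cop's knowledge set $S_t$ of positions compatible with the history just before the $(t+1)$-th probe, with update rule $S_{t+1}=N[S_t\setminus\{x_{t+1}\}]$, and prove the invariant $S_t\supseteq V'$ by induction. The inductive step splits on whether $x_{t+1}\in V'$: if not, $V'\setminus\{x_{t+1}\}=V'$; if so, $x_{t+1}$ has a $V'$-neighbour, which pulls $x_{t+1}$ back into $N[V'\setminus\{x_{t+1}\}]$. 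Consequently $|S_t|\geq|V'|\geq 3$ always, so the cop's post-probe set $S_t\setminus\{x_{t+1}\}$ never shrinks to a single vertex.

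Finally, I would exhibit an actual evasion trajectory compatible with this analysis. Since the cop's strategy is deterministic and the only non-winning response is ``no'', the probe sequence $x_1,x_2,\dots$ is predictable to the robber. Starting from any $r^{(1)}\in V'\setminus\{x_1\}$ and inductively choosing $r^{(t+1)}\in N[r^{(t)}]\setminus\{x_{t+1}\}$ yields a legal trajectory: the closed neighbourhood has cardinality at least $2$ because $r^{(t)}\in V'$ is non-isolated, and it lies in $V'$ by the closure property above. The only subtle point is the bound $|V'|\geq 3$, which falls out of the two-edge case analysis; otherwise the argument is mechanical, resting on the fact that one probe deletes at most one vertex from the possibility set while a single move always restores it.
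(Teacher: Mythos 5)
Your proof is correct and follows essentially the same route as the paper: the $(\Leftarrow)$ direction (sweep the isolated vertices, then probe one endpoint of the unique edge) is identical, and the $(\Rightarrow)$ direction is the paper's evasion argument --- the robber hides among the non-isolated vertices and can never be pinned down by single probes --- merely made more formal via the knowledge-set invariant $S_t\supseteq V'$ with $|V'|\geq 3$. No gaps; if anything, your version of the reverse direction is more carefully justified than the paper's one-line sketch.
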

\begin{proof}
First let $G$ be a graph with at most one edge. Then $G$ is a collection of isolated vertices and possibly one pair of vertices connected by the edge $e=\{u,v\}$. Note that if the robber starts the game in one of the isolated vertices, then he will remain in the same location throughout the whole game. The strategy for the Cop-player is to first check all isolated vertices one by one. If by doing so she doesn't catch the robber then he is in $u$ or $v$. Then the Cop-player checks $u$ and either immediately catches the robber, or knows that he is in $v$.

It is easy to see that if $G$ has more than 1 edge, then the robber should choose one of the edges and move between between its vertices. the Cop-player can locate the robber only if she checks the exact vertex with the robber, which can always be avoided.
\end{proof}

\begin{proposition} \label{prop:component}
For any graph $G$, $\zeta^*(G) = \max \{\zeta^*(C) \mid C \textrm{ is a connected component of } $G$\}$ or $\zeta^*(G)=2$ if $|E(G)|>1$ and every connected component of $G$ has at most one edge. 
\end{proposition}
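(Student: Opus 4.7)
The plan is to set $k^* = \max\{\zeta^*(C) : C \text{ is a connected component of } G\}$ and split into two cases: the generic case $k^*\ge 2$, where the goal is to prove $\zeta^*(G)=k^*$, and the degenerate case $k^*=1$, which is handled separately. The crucial structural observation is that the robber never leaves its starting component, and that a probe placed outside this component reports distance $+\infty$; in the centroidal order such probes are mutually tied and strictly exceed every finite distance. Consequently, probes lying outside the robber's component convey only the single piece of information ``the robber is not in any of these components'' and nothing about its location within its own.

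For the lower bound $\zeta^*(G)\ge k^*$, I would let the robber play an optimal strategy inside a component $C^*$ with $\zeta^*(C^*)=k^*$. For any Cop-play with $k<k^*$ probes per turn, restricting the sequence of probes and responses to those probes that lie in $C^*$ gives a legal play of the centroidal localization game on $C^*$ with at most $k<k^*$ probes per turn (the outside probes being redundant as observed above), and the robber wins by the definition of $\zeta^*(C^*)$.

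For the upper bound when $k^*\ge 2$, the Cop-player uses a two-phase strategy. In Phase~1 she keeps track of the components not yet excluded and, at each turn, probes one vertex in each of up to $k^*$ such components: if all reported distances are tied (all infinite) she eliminates these $k^*$ components permanently, while otherwise the unique probe returning a finite value reveals the robber's component $C$. Since the robber cannot cross components, Phase~1 finishes in at most $\lceil m/k^*\rceil$ turns, where $m$ is the number of components. In Phase~2, the Cop-player restarts and plays a winning $\zeta^*(C)$-strategy on $C$, padding with arbitrary vertices of $C$ up to $k^*$ probes; this succeeds because a winning strategy for $C$ works from every robber starting position in $C$, and in particular from whatever position the robber happens to occupy at the start of Phase~2.

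The degenerate case $k^*=1$ with $|E(G)|>1$ needs a short separate argument: the lower bound $\zeta^*(G)\ge 2$ is immediate from the previous proposition, and with $k=2$ probes the Cop-player simply enumerates the components, probing both endpoints of each single-edge component (immediately catching the robber if present there) and sweeping isolated vertices two at a time. The main subtlety I anticipate, and the step I expect to require the most care when writing the argument cleanly, is the bookkeeping in Phase~1, namely verifying that ``all ties'' genuinely implies ``all probed components are robber-free'' and that ``exactly one finite value'' correctly identifies the robber's component; both points follow directly from the relative-order semantics of the centroidal game but deserve an explicit sentence.
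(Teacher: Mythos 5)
Your proposal is correct and follows essentially the same route as the paper: identify the robber's component by exploiting the fact that probes in other components all report infinite (hence mutually tied and strictly larger) distances, then switch to a winning strategy inside that component. The only differences are cosmetic --- you eliminate up to $k^*$ components per turn where the paper uses two, and you spell out the lower bound and the degenerate one-edge-per-component case, which the paper treats as obvious.
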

\begin{proof}
The result is obvious if every connected component of $G$ has at most one edge. Otherwise, by the above Proposition, we may assume that the Cop-player can probe at least two vertices per turn. We show that, by checking two vertices in each round the Cop-player can determine the component of $G$ containing the robber.

Assume $G$ has more than one component. First note that the robber will never leave the component of the vertex of his first location. Moreover only vertices from this component are at finite distance from him. Say the Cop-player chooses vertices $c_1,\ c_2$, each from different components. Then at most one of $c_1,\ c_2$ are at finite distance from the robber. If their distances from the robber are equal, then both are from different component than the robber. Otherwise the component of vertex with smaller distance contains the robber. Once such a component $C$ has been detected, it is then sufficient to probe at most $\zeta^*(C)$ vertices per turn in $C$ to locate the robber. 
\end{proof}

\begin{proposition}
Every bipartite graph $G$ with partition classes of size $a$ and $b$	satisfies $\zeta^* (G)\leq \max (2, \min (a,b))$.
\end{proposition}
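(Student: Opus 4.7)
Without loss of generality I take $a \le b$ and, by Proposition~\ref{prop:component}, assume $G$ is connected. The case $a = 1$ is a star $K_{1,b}$: probing the centre together with a fresh leaf $\ell_t$ at turn $t$ uses two vertices, and each round either locates the robber or discards $\ell_t$ from the candidate set, so the game ends within $b$ rounds. I will therefore concentrate on the main case $a \ge 2$ and aim to exhibit a strategy using exactly $a$ probes per turn.

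On the first turn the Cop-player probes all of $A$. Either some probe returns $0$, locating $r_1 \in A$, or $r_1 \in B$ and the responses reveal $S := N(r_1) \subseteq A$ as the set of probes attaining the minimum distance $1$. Thus $r_1$ lies in the \emph{twin class} $B_S := \{v \in B : N(v) = S\}$. The structural fact I will exploit is that any two vertices $b, b' \in B_S$ satisfy $d(b, v) = d(b', v)$ for every $v \in V(G) \setminus \{b, b'\}$: every shortest path leaving $b$ or $b'$ must first step into $S = N(b) = N(b')$, after which the two distance computations coincide. Consequently two twins can only be separated by probing one of them directly.

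Having learned $S$, I fix vertices $a^* \in S$ and $a_0 \in A \setminus \{a^*\}$ (which is possible since $a \ge 2$) together with an enumeration $b^{(1)}, b^{(2)}, \ldots$ of $B_S$. At every subsequent turn $t \ge 2$ I probe the $a$-element set $(A \setminus \{a_0\}) \cup \{b^{(t-1)}\}$. The essential properties are that $a^*$ always appears in the probe (so a robber stepping onto $a^*$ is located by the $0$-response) and $a_0$ is the only unprobed vertex of $A$. I will argue by induction that if the robber is not yet located then at turn $t$ its position lies in $B_S \setminus \{b^{(1)}, \ldots, b^{(t-1)}\}$, and in fact the robber has never left $r_1$: any step into $A \setminus \{a_0\}$ is detected immediately, the step onto $b^{(t-1)}$ gives a $0$-response, and the case $r_t = a_0$ is excluded by the response-pattern argument below. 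Since the candidate set shrinks by one per round, the game ends in at most $|B_S| \le b$ rounds.

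The main obstacle, and the only step I expect to require real care, is verifying that the turn-$t$ response distinguishes the unprobed position $r_t = a_0$ from the unprobed positions $r_t = b \in B_S \setminus \{b^{(1)}, \ldots, b^{(t-1)}\}$ using only the relative order of distances. I split by whether $a_0 \in S$. If $a_0 \notin S$, then for $r_t = a_0$ all probed distances are at least $2$ (bipartite parity gives $d(a_0, a_j) \ge 2$ for $a_j \in A$, and $d(a_0, b^{(t-1)}) \ge 3$ since $a_0 \notin S = N(b^{(t-1)})$), whereas for $r_t = b$ one has $d(b, a^*) = 1$; the two minima thus differ in value. If instead $a_0 \in S$ (which forces $|S| \ge 2$, as $a^* \in S$ is distinct), then for $r_t = a_0$ the minimum distance $1$ is attained only at the $B$-probe $b^{(t-1)}$, whereas for $r_t = b$ the minimum $1$ is attained at every vertex of $(S \setminus \{a_0\}) \cap A$, notably at $a^*$; the minima thus differ in location. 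In either situation the centroidal information separates the two cases, which completes the plan.
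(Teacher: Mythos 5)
Your strategy is, in outline, the same as the paper's: probe the whole smaller class $A$ on the first turn to learn the robber's neighbourhood, then on every later turn probe $A$ minus one vertex together with one candidate position, so that the robber can never move without being caught or located and the candidate set shrinks by one per round. Your version is in fact more careful than the paper's in two respects: you track the exact twin class $B_S=\{w\in B:\ N(w)=S\}$ and observe that twins are indistinguishable unless probed directly, and --- more importantly --- the vertex $a_0$ you drop from the probe is chosen \emph{different} from a fixed $a^*\in S=N(r_1)$, so that a probed neighbour of the robber's true position is present at every turn. This is what makes the ``robber escaped to the unprobed $A$-vertex'' case detectable from relative order alone (minimizer set $\{b^{(t-1)}\}$ versus a minimizer set containing $a^*$), whereas the paper drops from the probe a neighbour $v$ of the robber itself and adds a vertex of $N(v)$.

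One step of your write-up is not valid as stated, although it turns out to be harmless. In the case $a_0\notin S$ you argue that the positions $r_t=a_0$ and $r_t=b\in B_S$ are distinguished because ``the two minima differ in value'' ($\ge 2$ versus $1$). In the centroidal game the Cop-player never sees distance values, only their relative order, and the two response patterns can in fact coincide: for instance with $A=\{a^*,a_0\}$, $S=\{a^*\}$ and $d(a_0,a^*)=2$, the probe $\{a^*,b^{(t-1)}\}$ yields distances $(2,3)$ from $a_0$ and $(1,2)$ from an unprobed twin --- the same ordered partition. The correct (and shorter) argument for this case is that it is vacuous: your induction confines the robber to $B_S\cup S$ at every turn, and $a_0\notin S$ means $a_0$ is not reachable from $B_S$ in one move, so it never enters the candidate set and nothing needs to be distinguished. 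With that one sentence replaced, the proof is complete.
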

\begin{proof}
As the Cop-player can probe at each round at least 2 vertices, by Proposition~\ref{prop:component} we may assume that $G$ is connected. Assume $a \le b$. 

When $a = 1$ the Cop-player may use the following strategy. Check at each round the vertex of the smaller partition class, which forces the robber not to move (otherwise he will be caught immediately). Second probed vertex is chosen from another partition class checking the whole class in a~sequence. Eventually exact location of the robber will be checked.

When $a > 1$ at first round the Cop-player probes all $a$ vertices from the smaller partition class. Either the robber is caught or there is some vertex, say $v$, that is adjacent with the robber's position and hence it minimizes the distance. 
In the latter case, the robber must occupy a vertex in the neighborhood of $v$. In further rounds of the game the Cop-player probes $a-1$ vertices from the smaller partition class except $v$ and step by step one vertex, say $u_i$, from $N(v)$. Note, that when the robber moves, he can either go to some probed vertex (he is then caught immediately) or to vertex $v$. But then $d(u_i, v) = 1$, while other distances are at least 2, hence the robber is located at $v$. Therefore, the robber is forced to stay in a vertex chosen at the first round. Eventually exact location of the robber will be checked.   
\end{proof}

It is easy to note, that for a path $P_n$ for $n \ge 3$ there is $\zeta^*(P_n) = 2$. Indeed, the Cop-player wins sweeping the path from one of its ends probing at each step 2 adjacent vertices. Since paths are precisely the graphs with pathwidth one, this remark can be generalized as follows.

A {\it path-decomposition} of a graph $G=(V,E)$ is a sequence ${\mathcal X}=(X_1,\cdots,X_t)$ of subsets of $V$, called {\it bags}, such that, for every edge $\{u,v\} \in E$, there exists a	bag containing both $u$ and $v$, and such that, for every $1\leq i \leq k \leq j \leq t$, $X_i \cap X_j \subseteq X_k$. The {\it width} of $\mathcal X$ equals $\max_{1\leq i \leq t} |X_i|-1$ and the {\it pathwidth}  of $G$, denoted by $\operatorname{pw}(G)$, is the minimum width of its path-decompositions. Pathwidth and path-decompositions are closely related to some kind of pursuit-evasion games~\cite{Bienstock}.

\begin{proposition}
Every graph $G$ satisfies $\zeta^*(G) \leq \operatorname{pw}(G)+1$. \end{proposition}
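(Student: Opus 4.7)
The plan is to fix a path-decomposition $\mathcal{X} = (X_1,\ldots,X_t)$ of $G$ realising $w := \operatorname{pw}(G)$, so $|X_i| \leq w+1$ for every $i$, and to have the Cop-player probe exactly the bag $X_i$ at turn $i$ (padding with repetitions, if needed, to a $(w+1)$-tuple). The game thus sweeps across the decomposition from $X_1$ to $X_t$ over at most $t$ turns.

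I would then prove by induction on $i$ the invariant that, at the beginning of turn $i$, the robber occupies a vertex of $A_i := \bigcup_{j \geq i} X_j$. The base case is trivial since $A_1 = V$. For the inductive step, write $L_i := V \setminus A_i$ and $R_i := A_i \setminus X_i$. If the robber lies in $X_i$, the probe of $X_i$ returns distance $0$ at some vertex and the Cop-player wins. Otherwise the robber lies in $R_i$. The standard separator property of path-decompositions---any edge from $L_i$ to $R_i$ would require a common bag, impossible since vertices of $L_i$ appear only in bags with index $< i$ and vertices of $R_i$ only in bags with index $> i$---implies that every neighbour of the robber lies in $X_i \cup R_i$. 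So after the robber moves, its new position lies in $R_i \cup (N(R_i) \cap X_i)$. For any $v \in N(R_i) \cap X_i$, a common bag with an $R_i$-neighbour must have index $\geq i+1$, so $v \in X_i \cap X_j$ for some $j \geq i+1$, and the consecutive-bag axiom forces $v \in X_{i+1}$. Hence the new position lies in $R_i \cup X_{i+1} \subseteq A_{i+1}$, maintaining the invariant.

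Since $A_t = X_t$, the invariant guarantees that at turn $t$ the robber is in $X_t$, where the probe locates it. The one non-routine step---the only place where I expect any subtlety---is the containment $N(R_i) \cap X_i \subseteq X_{i+1}$, which shows that the robber cannot slip through the ``front'' of the sweep by exploiting a vertex of $X_i$ that leaves the decomposition at step $i$; this is handled by the consecutive-bag property and is precisely what makes $w+1$ probes per turn sufficient.
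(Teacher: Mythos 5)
Your proof is correct and follows essentially the same route as the paper: probe the bags of an optimal path-decomposition in order, and use the interval (consecutive-bag) property to show the robber can never re-cross the current bag, so the sweep eventually corners it in the last bag. Your write-up just makes explicit the invariant and the containment $N(R_i)\cap X_i\subseteq X_{i+1}$ that the paper's one-line justification leaves implicit.
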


\begin{proof}
	By Proposition~\ref{prop:component} we may assume that $G$ is connected (as $\operatorname{pw}(G) = 0$ means that $E(G) = \emptyset$). Let $(X_1,\cdots,X_t)$ be an optimal path-decomposition (of width $pw(G)$) of $G$ and such that, for every $1<i\leq t$, $|X_{i-1} \setminus X_i|\geq 1$ (note that if $X_{i-1} \subseteq X_i$, removing $X_{i-1}$ from the sequence leads to a path-decomposition with same width). The strategy for the Cop-player is to probe $X_i$ for $i =1$ to $t$ step by step. As, for $k, j$ such that $k \le i \le j$, the robber cannot move from any vertex $v \in X_j$ to a vertex $u \in X_k$ without being caught then eventually (for $i=t$), he will be located.
\end{proof}

\section{Trees}\label{sec:tree}
Following results of Seager \cite{Seager1, Seager2} one can deduce that for any tree, say $T$, $\zeta(T)$ is either 1 or 2. More precisely, she proved that one cop is sufficient to locate a robber on any tree when robber is not allowed to move to a vertex just checked by the Cop-player (in the previous round) and that this restriction is necessary for trees that contains a ternary regular tree of height $2$ as a subtree. The same bound holds true for $\zeta^*(T)$ which can be easily proven.

\begin{theorem}
If $T$ is a tree with at least 3 vertices, then $\zeta^* (T)=2$.
\end{theorem}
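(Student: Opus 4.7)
The lower bound is immediate from the first proposition of Section~\ref{sec:warmup}: since $T$ is a tree on at least $3$ vertices it has at least $2$ edges, forcing $\zeta^*(T) \ge 2$. All the content therefore lies in exhibiting a winning Cop-strategy with $k=2$ probes per turn.

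The structural fact I would build on is that in a tree, probing an adjacent pair $\{u,v\}$ cleanly partitions $V(T)\setminus\{u,v\}$ into the two components of $T-uv$: for every $w \notin \{u,v\}$ the distances $d(u,w)$ and $d(v,w)$ differ by exactly $1$, so equality cannot occur and the response $d(u,r)\lessgtr d(v,r)$ identifies on which side of the edge $uv$ the robber lies (in addition to detecting $r\in\{u,v\}$). More generally, probing two vertices $u,v$ at distance $d$ in $T$ produces a three-way partition according to where the projection of $r$ onto the unique $u$--$v$ path falls: if $r$ hangs off the $i$-th internal vertex of the path then $d(u,r)-d(v,r) = 2i-d$, so the sign of this difference is a qualitative localiser.

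My proposed strategy is an iterative boundary-push. Root $T$ at an arbitrary leaf $\rho$ and maintain, after turn $t$, a connected rooted subtree $S_t\subseteq T$ known to contain the robber (initially $S_0 = T$). At turn $t+1$ the Cop-player selects a leaf $s$ of $S_t$ at maximum depth and lets $p$ be its neighbour in $S_t$; she probes $\{s,p\}$. By the structural fact, the response either catches the robber (if $r\in\{s,p\}$) or certifies $r\in S_t\setminus\{s\}$. Accounting for the ensuing one-edge move, the new candidate set $S_{t+1}$ is obtained from $S_t\setminus\{s\}$ by possibly re-adding the vertex above $p$ in $T$; a potential function based on the multiset of depths of leaves of $S_t$ then strictly decreases each round, forcing $|S_t|=1$ after finitely many turns and thereby locating the robber.

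The main obstacle will be handling the interaction between a probe and the robber's subsequent one-edge move, specifically ruling out the scenario in which the robber oscillates across the probed edge or escapes via a branching vertex of $T$ that lies on the boundary of $S_t$. To control this I would process the branches of $T$ in the order of a DFS from $\rho$, so that each pruned leaf $s$ has already been certified to lie outside the current candidate region; the residual argument then reduces to the sweep used to observe $\zeta^*(P_n)=2$ for $n\ge 3$ in the remark preceding the theorem, where any backward move by the robber across an adjacent probed pair is either detected on that very turn or re-intersected by the next probe along the DFS walk.
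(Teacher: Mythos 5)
Your lower bound and your structural observation about probing adjacent pairs in a tree are both correct and consistent with the paper, but the strategy itself has a genuine gap. Probing a deepest leaf $s$ of $S_t$ together with its parent $p$ only guards the single edge $sp$; the robber remains free to cross every other edge on the boundary of $S_t\setminus\{s\}$, so your claims that $S_{t+1}$ differs from $S_t\setminus\{s\}$ by at most one re-added vertex and that the leaf-depth potential strictly decreases are both false. A concrete counterexample is the spider with centre $c$ and three legs $c,a_i,b_i$ ($i=1,2,3$), rooted at the leaf $b_1$, against a robber who simply sits at $c$. The first probe $\{b_2,a_2\}$ certifies $r\notin\{a_2,b_2\}$, but after the robber's move the candidate set is already $T\setminus\{b_2\}$, since he could have stepped from $c$ to $a_2$; after the next probe $\{b_3,a_3\}$ the vertex $b_2$ re-enters the candidate set (the robber could have been at $a_2$ and stepped to $b_2$), and one checks that your forced sequence of probes makes the candidate sets cycle with period two between $T\setminus\{b_2\}$ and $T\setminus\{b_3\}$, so the robber is never located. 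A DFS ordering of the leaves does not repair this, because the leak occurs at the branching vertex $c$, which your probes never touch.

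The missing idea --- and the heart of the paper's proof --- is that the sweep must go from a pivot outward rather than from the leaves inward. The Cop maintains a single vertex $c$ such that the candidate region is a union of components of $T-c$, and she includes $c$ in \emph{every} probe, pairing it with one neighbour $c'$ of $c$ inside the candidate region. Because $c$ separates the candidate region from the rest of the tree and is probed each turn, the robber can never cross it unnoticed; each probe either locates the robber, eliminates the branch of $c'$, or confirms the robber inside that branch, in which case $c'$ becomes the new pivot and the candidate subtree strictly shrinks. With only two probes per turn, any repair of your leaf-first scheme that adds the separating vertex of the current candidate region to each probe collapses back to exactly this strategy.
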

\begin{proof}
We will describe a simple recursive strategy for the Cop-player (see Figure \ref{drzewo}). In the first round, a pair of adjacent vertices $c_{1}$ and $c_{2}$ is chosen. 
\begin{figure}[htb]
\center
\includegraphics[width=0.7\textwidth]{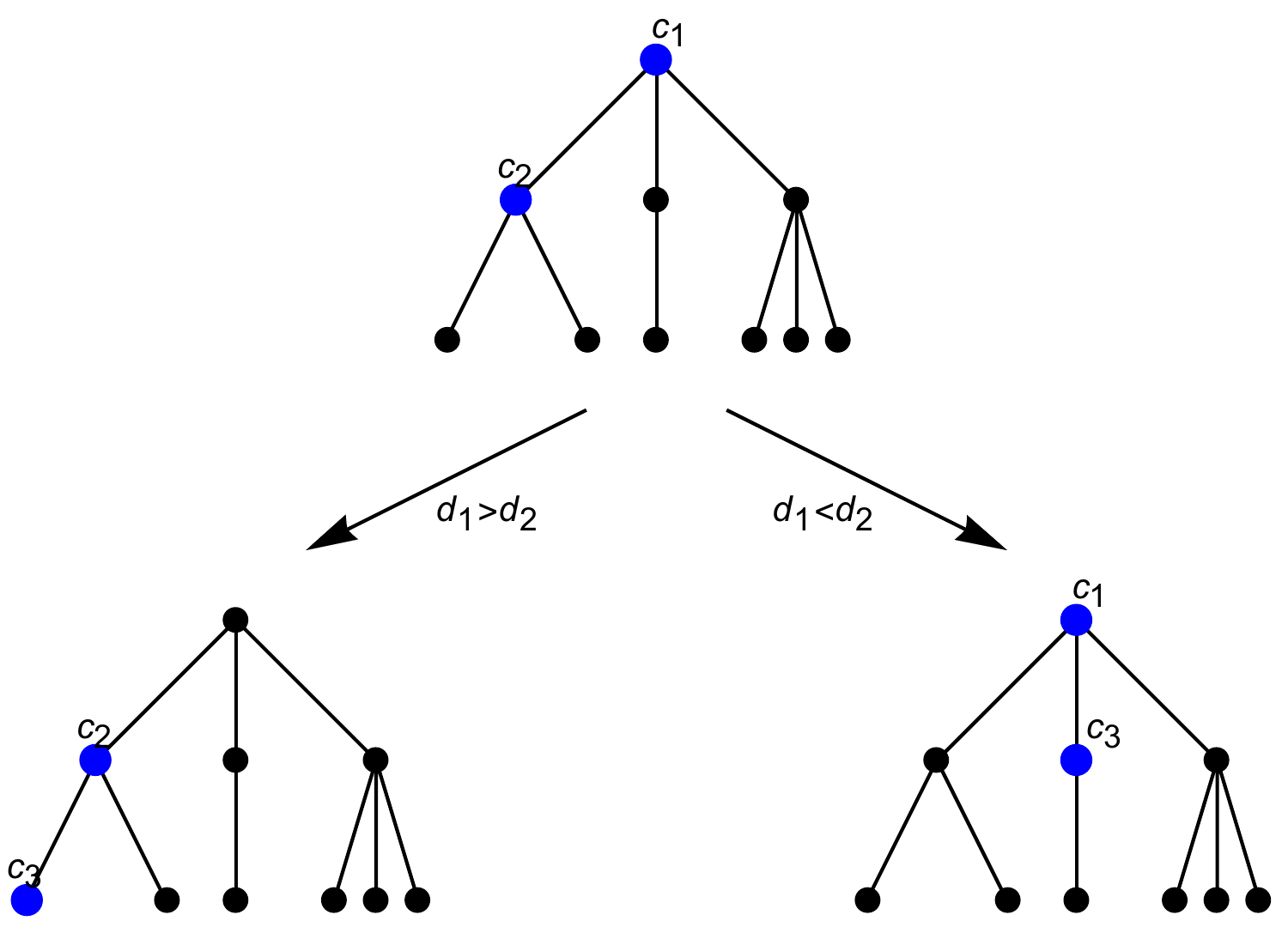}
\caption{Cop's strategy for trees.}\label{drzewo}
\end{figure}
Assuming the robber is not immediately caught, he says which of distances $d_{1}$ and $d_{2}$ from his current position to $c_1$, respectively to $c_2$,  is smaller. As each edge of the tree is a bridge (cut-edge) it splits the tree into 2 subtrees and information about $d_{1}$ and $d_{2}$ allows the Cop-player to determine the subtree in which the robber is hiding. W.l.o.g., let us assume that $d_1 > d_2 $. Then the robber is staying in the subtree containing $c_{2}$, denoted by $T_{2}$, otherwise he is staying in some subtree containing other neighbor of $c_{1}$. 

Suppose he is in $T_{2}$. Then in the second round the Cop-player will pick again $c_{2}$ and some neigbor $c_{3}$ of $c_{2}$ in $T_{2}$. Either the robber will be located immediately or after getting a response from him, Cop will know if the robber is hiding in a~subtree of $T_2$ containing $c_{3}$ or in some subtree of $T_2$ containing other neighbor of $c_{2}$. Note that the robber cannot move through $c_2$ without being located.
The process goes on, decreasing the size of the subtree where the robber can be, until the robber is caught.
\end{proof}

\section{Cartesian product of graphs}\label{sec:product}

Another result concerns the Cartesian product of graphs. Recall that the Cartesian product of graphs $G$ and $H$, is the graph $G \square H$ with vertex set $V(G \square H) = V(G) \times V(H)$ in which  $(u, v)$ is adjacent to $(u', v')$ if and only if either $\{u,u'\} \in E(G)$ and $v = v'$ or $u = u'$ and $\{v,v'\} \in E(H)$.
\begin{theorem} \label{thm:product}
For any graphs $G$ and $H$ there is
$$\zeta^*(G \square H) \le \max\{\Delta(G)+\Delta(H)+1, \Delta(G)+\zeta^*(H), \zeta^*(G)+\Delta(H)\}.$$
\end{theorem}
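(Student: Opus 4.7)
The plan is to construct an explicit Cop-player strategy achieving the claimed bound. The starting identity is the Cartesian-product distance formula $d_{G\square H}((u,v),(u',v')) = d_G(u,u') + d_H(v,v')$. Its crucial consequence is that probing a family $\{(u_i, v_0)\}_i$ with a common second coordinate reveals exactly the relative order of the distances $d_G(u_i, r_G)$, since all the probe values are shifted by the common constant $d_H(v_0, r_H)$; symmetrically for a common first coordinate. This lets the Cop-player simulate the centroidal localization game on each factor essentially independently, modulo recovering the "distance $= 0$" bit that the factor game needs.

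I would split the play into two phases. In Phase~1, the Cop-player determines the $G$-coordinate $r_G$ by running the $\zeta^*(G)$-strategy on $G$ via $\zeta^*(G)$ probes of the form $(u_i, v_0)$, for a reference vertex $v_0 \in V(H)$. Since any move of the robber in the $H$-direction leaves $r_G$ fixed, the $G$-coordinate follows a lazy walk on $G$, which matches the adversary model of the game defining $\zeta^*(G)$. The missing "$=0$" bit for this game is restored by $\Delta(H)$ auxiliary probes $(u^*, v_j)$ with $v_j$ ranging over $N_H(v_0)$, where $u^*$ is the vertex the $G$-strategy wishes to test against $r_G$: indeed $(u^*, v_0)$ is strictly the closest among $\{(u^*, v_j) : v_j \in N_H[v_0]\}$ if and only if $r_H = v_0$, so the Cop-player detects exactly those rounds in which $v_0 = r_H$, and in such a round the native "$=0$" bit on $(u^*, v_0)$ directly gives the information $u^* = r_G$; in rounds where the detection fails she shifts $v_0$ to another candidate in $V(H)$. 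Phase~1 therefore uses $\zeta^*(G) + \Delta(H)$ probes per round.

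In Phase~2, $r_G = u^*$ is known. The Cop-player now probes $(u^*, v_i)$ for $v_i$ executing the $\zeta^*(H)$-strategy, together with the $\Delta(G)$ probes $(u', v_0)$ for $u' \in N_G(u^*)$, where $v_0$ is chosen among the $v_i$'s so that $(u^*, v_0)$ serves both roles. Applying the same argmin argument now to the factor $G$, the family $\{(u', v_0) : u' \in N_G[u^*]\}$ pinpoints the current $r_G$, which must lie in $N_G[u^*]$ since the robber may traverse only one edge per turn; if $r_G = u^*$, the $H$-strategy advances with direct access to the "$=0$" bit on $(u^*, v_i)$, otherwise the Cop-player updates $u^*$ to the detected $G$-neighbor and restarts the $H$-strategy. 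Because a $G$-move preserves $r_H$, the $H$-coordinate itself evolves as a lazy walker in $H$, so the $\zeta^*(H)$-strategy will eventually locate it. Phase~2 uses $\Delta(G) + \zeta^*(H)$ probes per round, and the residual term $\Delta(G)+\Delta(H)+1$ absorbs the degenerate boundary cases (such as $\zeta^*(G)=1$ or $\zeta^*(H)=1$) where the construction collapses and the Cop-player simply probes the whole closed neighborhood $N_{G\square H}[(u^*, v_0)]$ in a single round.

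I expect the main obstacle to be in Phase~1: one must verify that the Cop-player can always eventually align the reference $v_0$ with the moving $r_H$ often enough to supply the "$=0$" bits demanded by the simulated $G$-strategy. This reduces to a scheduling argument for the choice of $v_0$, which should follow by cycling $v_0$ through $V(H)$ in a suitable order and arguing that an adversarial $H$-coordinate cannot indefinitely avoid every reference vertex without itself being pinned down by the same reasoning. A secondary technical point is to check that in Phase~2 the resets of the $H$-strategy triggered by $G$-moves of the robber do not prevent termination, which follows because each reset coincides with the robber being unable to move in $H$ that turn, so progress against the lazy $H$-walker is never reversed.
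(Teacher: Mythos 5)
There is a genuine gap, and it sits exactly where you flagged it: Phase~1's recovery of the ``$=0$'' bit. The simulated $\zeta^*(G)$-strategy is only guaranteed to win when it receives, at every round, the equality information the game promises, and this information cannot be dispensed with (for $G=K_2$ the relative order of distances from a single probe is vacuous, and only the ``$=0$'' bit locates the robber). Your mechanism supplies that bit only in rounds where the reference vertex $v_0$ happens to coincide with $r_H$, but $r_H$ is under the robber's control: on $H=C_n$ with $n\geq 4$, say, the robber always has three choices for his next $H$-coordinate and needs to avoid only the single vertex $v_0$ you will use next, so he evades forever and your $G$-simulation never receives a single equality bit. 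The closing claim that such a robber is ``pinned down by the same reasoning'' is circular. The paper avoids this entirely: its first phase does not simulate the factor game at all, but probes the closed neighborhood $N_{G\square H}[(u,v)]$ (this is where the term $\Delta(G)+\Delta(H)+1$ actually comes from --- it is not a degenerate boundary case), uses the fact that $(u,v)$ is strictly closer to the robber than every $(u_G,v)$ with $u_G\in N_G(u)$ if and only if $u=r_G$ (and symmetrically for $H$), and when neither coordinate matches it moves to a vertex $(u_G,v_H)$ whose distance to the robber has dropped by $2$, so that the chase terminates even though the robber moves.

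A second, independent problem is your Phase~2 reset rule: if the robber moves in $G$ at every turn, you restart the $H$-strategy infinitely often and it never advances past its first step, so the game never ends; the remark that ``progress is never reversed'' does not help, because a restart discards the strategy's internal state. The paper instead keeps the $H$-strategy alive across $G$-moves, exploiting that a $G$-move freezes $r_H$ (so the $H$-strategy may pause for one round and then resume), and it interleaves occasional closed-neighborhood probes of a candidate $(u,v)$ to certify the ``$=0$'' information that the $H$-strategy needs. Your Phase~2 tracking of $r_G$ via the argmin over $\{(u',v_0): u'\in N_G[u^*]\}$ is correct and matches the paper; it is the restart and the Phase~1 alignment argument that must be replaced before the bound is established.
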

\begin{proof}
Let $G, H$ be two graphs. The winning strategy for the Cop-player is divided into $2$ phases. During the first phase, probing at most $\Delta(G)+\Delta(H)+1$ vertices per turn, the Cop-player will chase the robber to know the exact value of at least one coordinate of his position. Suppose that known coordinate is the one from the graph $G$ (the second case is symmetric). Then, in the second phase, probing at most $\Delta(G)+\zeta^*(H)$ vertices per turn, the Cop-player will locate the robber using the strategy for graph $H$ while always maintaining its knowledge on the $G$'s coordinate of the robber.

Details goes as follows. Let us describe the first phase. Suppose the robber is hidden at vertex $(r_G, r_H)$. Let $(u, v) \in V(G \square H)$. Probing at most $\Delta(G)+\Delta(H)+1$ vertices, the Cop-player probes each vertex in the set $N_{G\square H}[(u, v)]$. Note that $d_{G\square H}((u,v),(r_G,r_H)) <d_{G\square H}((u_G,v),(r_G,r_H))$ for all $u_G \in N_G(u)$ if and only if $u=r_G$. Symmetrically, $d_{G\square H}((u,v),$ $(r_G,r_H)) <d_{G\square H}((u,v_H),(r_G,r_H))$ for all $v_H \in N_H(v)$ if and only if $v=r_H$. Therefore, the Cop-player can recognize when she has found one of the coordinate of the robber's location and, in this case, the second phase starts. Let us show that the Cop-player eventually achieves this situation. 
Suppose $u \neq r_G$ and $v \neq r_H$. Comparing distances of vertices $(\cdot, v)$ and $(u, \cdot)$ with distance to $(u, v)$, the Cop-player may recognize the first step of a shortest path to the robber position in both graphs $G$ and $H$. Suppose $d_G(u_G, r_G) < d_G(u, r_G)$ and $d_H(v_H, r_H) < d_H(v, r_H)$ for some $u_G \in N_G(u)$ and $v_H \in N_H(v)$. Note that $$d_{G\square H}\left((u_G, v_H), (r_G, r_H)\right) = d_{G\square H}\left((u, v), (r_G, r_H)\right) -2.$$ Therefore even after the robber move the Cop-player is able to decrease the distance from the robber by choosing to probe $N_{G\square H}[(u_G, v_H)]$ in the next round.   
So, eventually, at least one coordinate of the robber position will be known by the Cop-player. 

Now, the second phase proceeds as follows. Suppose, w.l.o.g., that the known coordinate is the one from the graph $G$, say $r_G \in V(G)$, while the coordinate $r_H \in V(H)$ is unknown.  Now, the Cop-player uses the winning strategy in the graph $H$ probing $\zeta^*(H)$ vertices at each round, say $(r_G, v_1), (r_G, v_2), \dots, (r_G, v_{\zeta^*(H)})$ and at most $\Delta(G)$ vertices of the form $(u, v_1)$ for $u \in N(r_G)$. When the robber does not move on $G$-coordinate, this is simply a winning strategy. If the robber moved, then exactly one vertex of the form $(\cdot, v_1)$, say $(u, v_1)$, shows the minimal distance. Hence, the Cop-player is able to control such a move changing base $G$-coordinate to $u$ in next round. Cop-player must also check whether $H$-coordinate of some probed vertex is not equal to $H$-coordinate of the robber position which may be a part of the winning strategy on $H$\footnote{The authors are grateful to anonymous referee of pointing this out.}. Fortunately, moving on $G$ the robber does not change his $H$-coordinate, which allow Cop-player to skip one move in the strategy on $H$. Therefore, now and every second step when the robber continues moving on $G$-coordinates, if exactly one vertex of the form $(r_G, \cdot)$, say $(r_G, v)$ shows the minimal distance (which is necessary condition for $r_H = v$), then in the next round Cop-player probes (at most $\Delta(G)+\Delta(H)+1$) vertices of the closed neighborhood of vertex $(u, v)$. This guarantees to localize the robber if in the last round $r_H = v$, otherwise it allows the Cop-player to recognize where the robber has moved if he moved on $G$ once again. Otherwise, if more than one vertex has the minimal distance on $H$-coordinate, and every other second step, the Cop-player picks new vertices changing both coordinates: on $G$ according to the knowledge of the last move of the robber and on $H$ according to the winning strategy there. When the robber stops moving on $G$-coordinates the Cop-player returns to the described above strategy localizing the robber on $H$ and keeping localization on $G$.    
Therefore, as the Cop-player still goes forward with the winning strategy on $H$, the robber will eventually be located.     
\end{proof}

Mimicking the proof we get immediately an analogue for the (metric) localization game. In this case the second phase is easier as the Cop-player can simply guess the robber location when the robber moved on $G$-coordinate while Cop-player probed a vertex with $H$-coordinate of the robber's position.   
\begin{corollary}
For any graphs $G$ and $H$ there is
$$\zeta(G \square H) \le \max\{\Delta(G)+\Delta(H)+1, \Delta(G)+\zeta(H), \zeta(G)+\Delta(H)\}.$$
\end{corollary}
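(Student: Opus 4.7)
My plan is to follow the two-phase strategy of the proof of Theorem~\ref{thm:product}, replacing centroidal information by the finer exact distances and simplifying the second phase. As noted in the paragraph preceding the corollary, the gain is that no round of the embedded $H$-strategy needs to be ``skipped'' to resolve ambiguity.

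Phase~1 proceeds verbatim: the Cop-player maintains a reference vertex $(u,v) \in V(G \square H)$ and probes $N_{G \square H}[(u,v)]$, which contains at most $\Delta(G) + \Delta(H) + 1$ vertices. Since exact distances strictly refine the centroidal information, the same analysis shows that after finitely many rounds the Cop-player either catches the robber or learns one of his coordinates. By symmetry, assume the known coordinate is $r_G \in V(G)$; the other case yields the $\zeta(G) + \Delta(H)$ term in the bound.

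Phase~2 runs a fixed winning strategy $\sigma_H$ for the localization game on $H$ in parallel with $G$-tracking. Each round the Cop-player probes $\{r_G\} \times S_H$, where $S_H \subseteq V(H)$, $|S_H| \leq \zeta(H)$, is the set prescribed by $\sigma_H$, together with the scouts $\{(u,v^*) : u \in N_G(r_G)\}$ for some distinguished $v^* \in S_H$, totalling at most $\zeta(H) + \Delta(G)$ probes. The identity $d_{G\square H}((x,y),(r_G',r_H)) = d_G(x,r_G') + d_H(y,r_H)$ is the key tool: the scouts reconstruct the robber's current $G$-coordinate $r_G'$ (if $r_G' = r_G$ all scouts sit at distance $1 + d_H(v^*,r_H)$, while if the robber moved to $u' \in N_G(r_G)$ then the scout at $u = u'$ attains distance $d_H(v^*,r_H)$, strictly less than the other scout distances and one less than the slice probe $(r_G,v^*)$). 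Knowing $r_G'$, the Cop-player subtracts $d_G(r_G,r_G') \in \{0,1\}$ from the distance of each slice probe to recover the exact $H$-distance $d_H(v,r_H)$ and feeds it to $\sigma_H$.

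The step I expect to be the main (though mild) obstacle is verifying that $\sigma_H$ genuinely advances one step per round even when the robber keeps moving in the $G$-direction: the $H$-distance information recovered from the slice probes must be exactly what $\sigma_H$ expects for its game in $H$. The scouts handle this by fixing the current $G$-coordinate before we interpret the slice probes; since a $G$-move does not alter the $H$-coordinate, $\sigma_H$ sees a valid single-round evolution of its own game on $H$. Hence $\sigma_H$ eventually pins down $r_H$ and, combined with the always-maintained $G$-coordinate, locates the robber. The worst-case number of probes per round is $\max\{\Delta(G)+\Delta(H)+1,\, \zeta(H)+\Delta(G),\, \zeta(G)+\Delta(H)\}$, giving the claimed bound.
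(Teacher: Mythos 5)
Your proof is correct and follows essentially the same route as the paper, which simply instructs the reader to mimic the two-phase strategy of Theorem~\ref{thm:product} and notes that the second phase becomes easier once exact distances are available. Your explicit reconstruction of the robber's current $G$-coordinate via the scouts in the $v^*$-column, followed by subtracting $d_G(r_G,r_G')\in\{0,1\}$ from the slice probes to feed exact $H$-distances to $\sigma_H$, is an accurate formalization of the simplification the paper only sketches in one sentence.
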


By a trivial induction on $d\in \mathbb{N}^*$, we also get:
\begin{corollary}
For any hypercube $Q_d= K_2 \square Q_{d-1}$, $d \geq 1$, $\zeta^*(Q_d) \leq d+1$. 
\end{corollary}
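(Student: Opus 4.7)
The plan is a straightforward induction on $d \in \mathbb{N}^*$, invoking Theorem~\ref{thm:product} in the inductive step with $G = K_2$ and $H = Q_{d-1}$.

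First I would handle the base case $d=1$: since $Q_1 = K_2$ has a single edge, the very first proposition of Section~\ref{sec:warmup} gives $\zeta^*(Q_1) = 1 \leq 2$, as required.

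For the inductive step, I assume $\zeta^*(Q_{d-1}) \leq d$ and apply Theorem~\ref{thm:product} to $Q_d = K_2 \square Q_{d-1}$. The relevant parameters are $\Delta(K_2) = 1$, $\zeta^*(K_2) = 1$, $\Delta(Q_{d-1}) = d-1$, and $\zeta^*(Q_{d-1}) \leq d$ by the induction hypothesis. Plugging these into the bound yields
\[
\zeta^*(Q_d) \leq \max\bigl\{1 + (d-1) + 1,\ 1 + d,\ 1 + (d-1)\bigr\} = \max\{d+1,\, d+1,\, d\} = d+1,
\]
which closes the induction.

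There is essentially no obstacle here: the only thing to verify is that the three expressions in the maximum of Theorem~\ref{thm:product} all come out to at most $d+1$, which is an immediate arithmetic check. The whole corollary is just a one-line application of the product bound once the induction hypothesis is in place.
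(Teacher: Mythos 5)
Your proof is correct and is exactly the argument the paper intends: the paper derives this corollary ``by a trivial induction on $d$'' from Theorem~\ref{thm:product}, and your base case ($\zeta^*(K_2)=1\leq 2$) and inductive step (plugging $\Delta(K_2)=1$, $\Delta(Q_{d-1})=d-1$, $\zeta^*(Q_{d-1})\leq d$ into the three-term maximum to get $d+1$) are precisely that induction carried out explicitly.
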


\section{Outerplanar graphs}\label{sec:outer}

In this section we prove that $\zeta (G) \leq \zeta^*(G) \leq 3$ for any outerplanar graph $G=(V,E)$. 

We first recall some basic notions of graphs. For any $S\subseteq V$, let $N(S)=\{v \in V\setminus S \mid \exists u \in S, \{u,v\} \in E\}$. A set $S$ is a separator if $G\setminus S$ has at least two connected components. A set $S$ is a {\it minimal separator} if there exist two vertices $a$ and $b$ in distinct connected components of $G \setminus S$ and no proper subset of $S$ separates $a$ and $b$. 
Any connected component $A$ of $G \setminus S$ is called a {\it full} component (with respect to $S$) if $N(A)=S$. A set $S$ is a minimal separator if and only if it has at least two full components.

\begin{definition}
	A graph is {\it outerplanar} if it has a planar embedding such that every vertex stands on the outer-face.
\end{definition}

Let us recall basic properties of outerplanar graphs.  A {\it minor} of $G$ is any graph that can be obtained from a subgraph $H$ of $G$ by contracting some edges of $H$. Recall that an outerplanar graph does not admit $K_{2,3}$ as a minor (since the class of outerplanar graphs is minor-closed and $K_{2,3}$ is not outerplanar).  
In particular, this implies that any minimal separator of an outerplanar graph has at most two vertices (since otherwise, there would be a $K_{2,3}$ minor). Moreover, for any minimal separator $S$ of an outerplanar, if $|S|=2$, then $G \setminus S$ has exactly two full connected components (otherwise there would be a $K_{2,3}$ as minor).

\begin{theorem} \label{thm:outer}
Every outerplanar graph $G$ satisfies $\zeta^* (G) \leq 3$. Moreover, if $G$ has $n$ vertices, then there is a cop strategy using $3$ probes per turn that takes $O(n^2)$ turns.
\end{theorem}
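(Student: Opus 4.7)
The plan is to describe a recursive three-cop strategy that exploits the small-separator structure of outerplanar graphs recalled just above: every minimal separator has size at most two, with exactly two full components in the size-$2$ case.

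By Proposition~\ref{prop:component} I may assume $G$ is connected. The strategy maintains a connected induced subgraph $G[A]$ known to contain the robber, together with a ``pin set'' $B\subseteq A$ of size at most two such that every edge between $A$ and $V(G)\setminus A$ has an endpoint in $B$. The cops probe $B$ together with one working vertex $w\in A\setminus B$ in every round, using at most three probes total. Probing $B$ pins the robber inside $A$: any step toward $V(G)\setminus A$ must pass through a vertex of $B$ and is detected immediately by the corresponding probed distance dropping to $0$. Initially $A=V(G)$, $B=\emptyset$, and a single calibration round of three probes sets up the first non-trivial pair $(A,B)$ with $|B|\leq 2$---this is possible precisely because $G$ has no minimal separator of size $\geq 3$.

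To shrink $A$ I pick $w$ as follows. If $G[A]$ has a cut vertex, $w$ is a balanced one; otherwise $G[A]$ is $2$-connected outerplanar and I pick $w$ so that $\{s,w\}$ is a minimal $2$-separator of $G[A]$ for some $s\in B$, whose existence is read off from the weak dual tree of the outerplanar embedding of $G[A]$. The relative order of $d(r,v)$ for $v\in B\cup\{w\}$ then pinpoints the component $C$ of $G[A]\setminus(B\cup\{w\})$ containing the robber, via the standard detour principle in a $K_{2,3}$-minor-free graph: being on one side of $\{s,w\}$ forces the robber's shortest path to the opposite side to pass through $s$ or $w$, an equality easily read from the distance order. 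The new region is $A'=C\cup\{s,w\}$, and $|A'|<|A|$.

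The main obstacle I foresee is keeping the pin set small after shrinking. The natural new pin set $B'$ consists of $\{s,w\}$ together with the second vertex $s'\in B\setminus\{s\}$ if $s'$ happens to lie in $A'$, which can give $|B'|=3$ in the unlucky case. I would recover by spending $O(1)$ additional rounds in which the cops gradually transfer the pin from $s'$ to $\{s,w\}$: since the robber advances only one edge per round, a short sequence of probes alternating between $s'$ and one of $\{s,w\}$ reliably certifies that the robber cannot exploit the transition, after which $|B|\leq 2$ is restored. A secondary concern is handling tied distance responses where the shrinking step is ambiguous between at most two candidate components; the cops resolve this by guessing, verifying with a further probe in the next round, and backtracking in $O(1)$ rounds when wrong. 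Each shrink therefore strictly decreases $|A|$ at a constant amortised cost, and since $|A|\leq n$, the game ends in $O(n^2)$ rounds, establishing both $\zeta^*(G)\leq 3$ and the quantitative bound.
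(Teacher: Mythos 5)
Your overall architecture (maintain a region $A$ containing the robber together with a boundary set $B$ of at most two vertices, probe $B$ plus one working vertex, shrink $A$) matches the paper's two ``Situations'' in spirit. But there is a genuine gap at the heart of the shrinking step. You claim that after choosing $w$ so that $\{s,w\}$ is a $2$-separator of $G[A]$, ``the relative order of $d(r,v)$ for $v\in B\cup\{w\}$ pinpoints the component $C$ of $G[A]\setminus(B\cup\{w\})$ containing the robber.'' It does not. If $\{s,w\}$ separates $G[A]$ into components $C_1$ and $C_2$, both components contain vertices closer to $s$ than to $w$ and vice versa, so the ordering of $d(r,s)$ and $d(r,w)$ carries no information about which side $r$ is on; and a single extra probed vertex $x\in C_1$ only certifies $r\in C_1$ when $d(r,x)\leq\min\{d(r,s),d(r,w)\}$, which fails whenever $r$ is in $C_1$ but far from $x$. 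Resolving which side of a $2$-separator the robber occupies is exactly the hard part of this theorem: the paper's proof spends $O(n)$ rounds on it, sequentially probing the clockwise-ordered neighbours $v_1,\dots,v_d$ of the separator vertex along the outer face in consecutive pairs $\{v,v_i,v_{i+1}\}$, and proving by induction (using that $\{v,v_{i+1}\}$ separates the already-cleared region $Y_i$ from the rest) that the moving robber cannot re-enter cleared territory. Nothing in your plan supplies this scan or the invariant that makes it sound against a moving robber.

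Two secondary issues. First, your recovery steps (``transfer the pin from $s'$ to $\{s,w\}$ in $O(1)$ rounds,'' and ``guess, verify, backtrack in $O(1)$ rounds'' on ties) are not arguments: in a pursuit game against a moving adversary you cannot in general backtrack, because the information you failed to gather is lost while the robber moves; the paper avoids this by never guessing -- every probe either locates the robber's component or provably excludes a region. Second, your own accounting is inconsistent: if each shrink cost $O(1)$ amortised rounds and $|A|$ decreases by at least one per shrink, you would get $O(n)$ rounds, not $O(n^2)$; the $O(n^2)$ bound in the statement reflects the fact that each shrink genuinely costs $O(n)$ rounds, which is the cost of the scan you have omitted.
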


\begin{proof}
	From now on, let us assume that an outerplanar embedding of $G$ is given (this can be computed in polynomial-time) and fixed. Note that, once the embedding is fixed, it defines a cyclic ordering (that may be clockwise or counter-clockwise) of the neighbors of each vertex.
	
	The algorithm proceeds as follows, gradually reducing the set of vertices where the robber may be hidden. Initially, the Cop-player probes any vertex $v$ and knows that the robber stands at some vertex of $R=V \setminus \{v\}$ (unless the robber is immediately located). Then, after the robber's move, it can be only at some vertex of $R \cup \{v\}$. 
	
	Now, let us assume that we have reached one of the following two possible situations:
	\begin{description}
		\item[Situation $1$] There is a vertex $v$ and $R \subseteq V \setminus v$, where $R$ is a union of connected component(s) of $V \setminus \{v\}$, such that the robber stands at some vertex of $R \cup \{v\}$ and this is the turn of the Cop-player (note that we are in such situation after the first turn); or
		\item[Situation $2$] There are two vertices $\{u,v\}$ and $R \subseteq V \setminus \{u,v\}$, where $R$ is a union of connected component(s) of $V \setminus \{u,v\}$ such that the robber stands at some vertex of $R \cup \{u,v\}$ and this is the turn of the Cop-player. Moreover, there is at most one connected component of $R$ that is full with respect to $\{u,v\}$ and there is a $uv$-path in $G\setminus R$ (possibly $u$ and $v$ are adjacent). 
	\end{description}
	We present an algorithm that, starting from one such situation,
	 reaches a new such situation while strictly reducing the size of $R$. The algorithm uses $O(n)$ probes. This  ensures the localization of the robber in $O(n^2)$ turns. 
	
	First, let us consider the Situation $1$. 
	
	\begin{itemize}
		\item If $R$ is not connected, let $X$ be any connected component of $R$, the Cop-player checks whether the robber is located in $X$. For this purpose, let $\{v_1,\cdots,v_d\}$ be the neighbors of $v$ in $X$ in the order they appear in the outer-face (w.l.o.g., clockwise). Note that, by outer-planarity, for any connected component $Y$ of $X \setminus \{v_1,\cdots,v_d\}$,  $N(Y) \subseteq \{v_i,v_{i+1}\}$ for some $0\leq i < d$. 
		
		The Cop-player sequentially probes $\{v,v_i,v_{i+1}\}$ for $i$ from $1$ to $d-1$. 
		
		If, for some $1\leq i <d$, the probe at $\{v,v_i,v_{i+1}\}$ indicates that the robber is closer to $v_i$ or $v_{i+1}$ than it is to $v$, then the robber is necessarily in $X$ and we reach Situation $1$ for $v$ and $X\subsetneq R$. 
		
		Otherwise, we show that the robber cannot be in $X$. Indeed, for every $1 \leq i <d$, let $Y_i$ be the union of the connected components $Y$ of $X\setminus \{v_1,\cdots,v_d\}$ such that $N(Y) \subseteq \{v,v_1,\cdots,v_{i+1}\}$ (in particular, $N(Y_i) \subseteq \{v,v_1,\cdots,v_{i+1}\}$). Note that $Y_{d-1} = X \setminus N(v)$. By induction on $i$, let us assume that $\{v,v_j,v_{j+1}\}$ have been probed sequentially for $j$ from $1$ to $i-1$ and that the robber cannot be in $Y_{i-1}$. When $\{v,v_i,v_{i+1}\}$ is probed, if the robber occupies a connected component $Z$ of $X \setminus N(v)$ such that $N(Z) \subseteq \{v_i,v_{i+1}\}$ (i.e., in $Y_{i} \setminus Y_{i-1}$), then its distance to $v_i$ or $v_{i+1}$ should be strictly less than its distance to $v$. Moreover, if the robber was in $X \setminus Y_{i}$ before the probe, it must still be the case since $\{v,v_{i+1}\}$ separates $Y_{i}$ from $X \setminus Y_{i}$ (and also from $G \setminus X$). Hence, unless the robber is caught or detected in $Y_{i} \setminus Y_{i-1}$, it cannot be in $Y_{i}$ after this probe.
		
		Eventually, either the robber is detected in $X$ and we reach Situation $1$ (for $v$ and $X$ as described above) or we can certify that the robber cannot be in $X$, and we reach the Situation $1$ where $v$ keeps its role and $R\setminus X$ plays the role of $R$.
		
		\item Otherwise, if $R$ is connected and $v$ has a unique neighbor $w$ in $R$ (note that this is the case if and only if $\{v,w\}$ is a cut-edge/bridge), the Cop-player probes $\{v,w\}$. Then, the robber is immediately located or occupies some vertex in $R \setminus \{w\}$. We are back to Situation 1 where $w$ plays the role of $v$ and $R\setminus \{w\}$ plays the role of $R$. 
		
		\item Finally, let us assume that $R$ is connected and $v$ has $d\geq 2$ neighbors $v_1,\cdots,v_d$ in $R$ where these neighbors are ranked in the order (w.l.o.g., clockwise) they appear in the outer-face. Then, the Cop-player probes $\{v,v_1\}$. 
		Clearly, since $R$ is connected, the connected component $C$ of $R \setminus \{v_1\}$ containing $v_2$ is full with respect to $\{v,v_1\}$ (i.e., $N(C)=\{v,v_1\}$) and because $v_1$ is the first (in clockwise order) neighbor of $v$ and by outer-planarity, there is at most one full component.
Hence, we have reached Situation $2$ for the edge $\{v,v_1\}$ and $R \setminus \{v_1\}$.
		\end{itemize}

	Now, let us consider Situation $2$, i.e., it is the turn of the Cop-player and the robber is at some vertex of $R  \cup \{u,v\}$ where $R$ is a union of connected components of $V \setminus \{u,v\}$ for some vertices $u$ and $v$. Moreover, at most one connected component $F$ of $R$ is full with respect to $\{u,v\}$ (by definition of Situation $2$). In that case, the Cop-player first probes $u$ and $v$. If the robber is located (at $u$ or $v$), we are done. Otherwise, after the move of the robber, we reach exactly the same Situation $2$ with the additional information that $u$ and $v$ have been probed last.
	
	\begin{itemize}
		\item First, let us assume that there is a connected component $X$ of $R$ such that $N(X)=\{v\}$. 
		Note that during the previous probe of $u$ and $v$, the distance between $v$ and the robber must have been strictly less than the distance between $u$ and the robber, since otherwise it is already clear that the robber cannot be in $X$. In particular, it means that the distance between $u$ and the robber was at least two (during the previous probe). 
		
		Let $\{v_1,\cdots,v_d\}$ be the neighbors of $v$ in $X$ in the order (w.l.o.g., clockwise) they appear in the outer-face. The Cop-player sequentially probes $\{v,v_i,v_{i+1}\}$ (odd turns) and $\{u,v,v_{i+1}\}$ (even turns) for $i$ from $1$ to $d-1$. 
		
		The fact that $u$ and $v$ are probed every two turns prevents the robber to reach a vertex in $G \setminus R$ without being located. Indeed, at every such even turn, the distance between the robber and $v$ must be strictly less than the distance between the robber and $u$ (otherwise, it becomes sure that the robber is not in $X$, in which case we reach the Situation $2$ for $\{u,v\}$ and $R\setminus X$ plays the role of $R$). In particular, this implies that the robber is at distance at least two from $u$ and so cannot cross $u$ in two turns. 
		
		The fact that $v$ and $v_{i+1}$ are probed both during the odd and even $i${th} turns allows the proof of first case of Situation $1$ to apply (because $\{v,v_{i+1}\}$ separates $Y_i$ from $X \setminus Y_i$, where $Y_i$ is defined as in the first case of Situation $1$). 
		
		Therefore, as for the first case of Situation $1$, the robber must be in $X$ if and only if there is some odd turn $i< d$ such that the distance between $v_i$ or $v_{i+1}$ and the robber's location $r$ is less than the distance between $v$ and $r$. If the robber is in $X$, we reach the Situation $1$ for $v$ and $X\subsetneq R$ plays the role of $R$. Otherwise, we reach the Situation $2$ for $\{u,v\}$ and $R\setminus X$ plays the role of $R$.
		\item The case is symmetric (exchanging $u$ and $v$) if there is some connected component of $R$ that is adjacent only to $u$. 
		\item Hence, if neither of the above cases occurs, we may assume that $R$ is connected and that it is full with respect to $\{u,v\}$. Let $P=(v=w_0,w_1,\cdots,w_d,w_{d+1}=u)$ be the shortest path among the paths from $v$ to $u$ with internal vertices in $R$ (let us emphasize that we consider the paths with at least one vertex in $R$, even if $u$ and $v$ may be adjacent). Such a path exists because $R$ is a full component and is unique by both outer-planarity and the existence of a $uv$-path in $G\setminus R$. Note that, by outer-planarity, for any connected component $Y$ of $R \setminus P$, there is $0\leq i \leq d$ such that $N(Y) \subseteq \{w_i,w_{i+1}\}$.
		
		If there is no connected component $Y$ of $R \setminus P$ such that $N(Y)=\{v,w_1\}$, then the Cop-player probes $\{u,v,w_1\}$ and (unless the robber is located), we reach Situation $2$ for $\{w_1,u\}$ and $R \setminus \{w_1\}$. 
		Hence, let $L$ be the (unique) component of $R \setminus P$ that is full with respect to $\{v,w_1\}$. Let $R'=R \setminus (L \cup \{w_1\})$. 
		
		The Cop-player first probes $\{u,v,w_1\}$. Let $r$ be the (unknown) position of the robber during this probe.   Let us discuss the result of such a probe.
		\begin{claim}
		Either $d(w_1,r)<\min \{d(u,r),d(v,r)\}$ or Situation $2$ is reached for $\{w_1,u\}$ and $R'$ or for $\{w_1,v\}$ and $L$.
		\end{claim}
		 \begin{claimproof}
		 There are three cases to be considered.
		\begin{itemize}
			\item if $d(u,r)<d(v,r)$ and $d(u,r)\leq d(w_1,r)$, then $r$ must be in $R'=R \setminus (L \cup \{w_1\})$ and therefore, we reach Situation $2$ for $\{w_1,u\}$ and $R'$.  
			\item  if $d(v,r)<d(u,r)$ and $d(v,r)\leq d(w_1,r)$,  then $r$ must be in $L$ and therefore, we reach Situation $2$ for $\{w_1,v\}$ and $L$.
			\item if $d(v,r)=d(u,r)$, it can be checked that $w_1$ must be strictly closer to $r$ than $u$ and $v$ (by outer-planarity and by uniqueness of $P$ in $R$) unless the robber is located. 
			In particular (unless the robber is at $w_1$ and then is located), it implies that the robber is at distance at least $2$ from $u$ and $v$. 
		\end{itemize}
		\end{claimproof}
		
		Let us then assume that we are in the former case ($w_1$ closer to the robber when probing $\{u,v,w_1\}$). The aim of the following strategy is to decide whether the robber is in $L$ or in $R'=R \setminus (L \cup \{w_1\})$.  An important ingredient for the correctness of the strategy is that $u,v$ and $w_1$  will be probed every second step (even turns) which will prevent the robber to leave $R$ without being caught (since $u$ and $v$ separate $R$ from the rest of the graph). Indeed, as described above, each time $u,v$ and $w_1$ are probed, either the robber is located either in $L$ or $R'$, or it must be at distance at least $2$ from $u$ and $v$, and therefore, probing $u$ and $v$ every two steps is sufficient to avoid the robber crossing them. 
		
		W.l.o.g., let us assume that, when going clockwise along the outer-face (recall that the embedding is fixed), we first meet $v$, then $w_1$ and finally $u$. 
		Let $s_1,\cdots,s_t$ be the neighbors (in clockwise order) of $w_1$
		 in $L$. By outer-planarity, for every connected component $Y$ of $L\setminus \{s_1,\cdots,s_t\}$, there is $1 \leq i <t$ such that $N(Y)\subseteq \{s_i,s_{i+1}\}$, or $N(Y) = \{v,s_1\}$. Let $Y_0$ be the (unique if any) component of $L\setminus \{s_1,\cdots,s_t\}$ that is full with respect to $\{v,s_1\}$, and for any $1\leq i \leq t$, let $Y_i$ be the union of the connected components $Y$ of $L\setminus \{s_1,\cdots,s_t\}$ such that $N(Y)\subseteq \{v,s_1,\cdots,s_i\}$ (see Figure~\ref{fig:outerplanar}).
		 
		    \begin{figure*}[t]
  \begin{center}
   \includegraphics[width=0.55\textwidth]{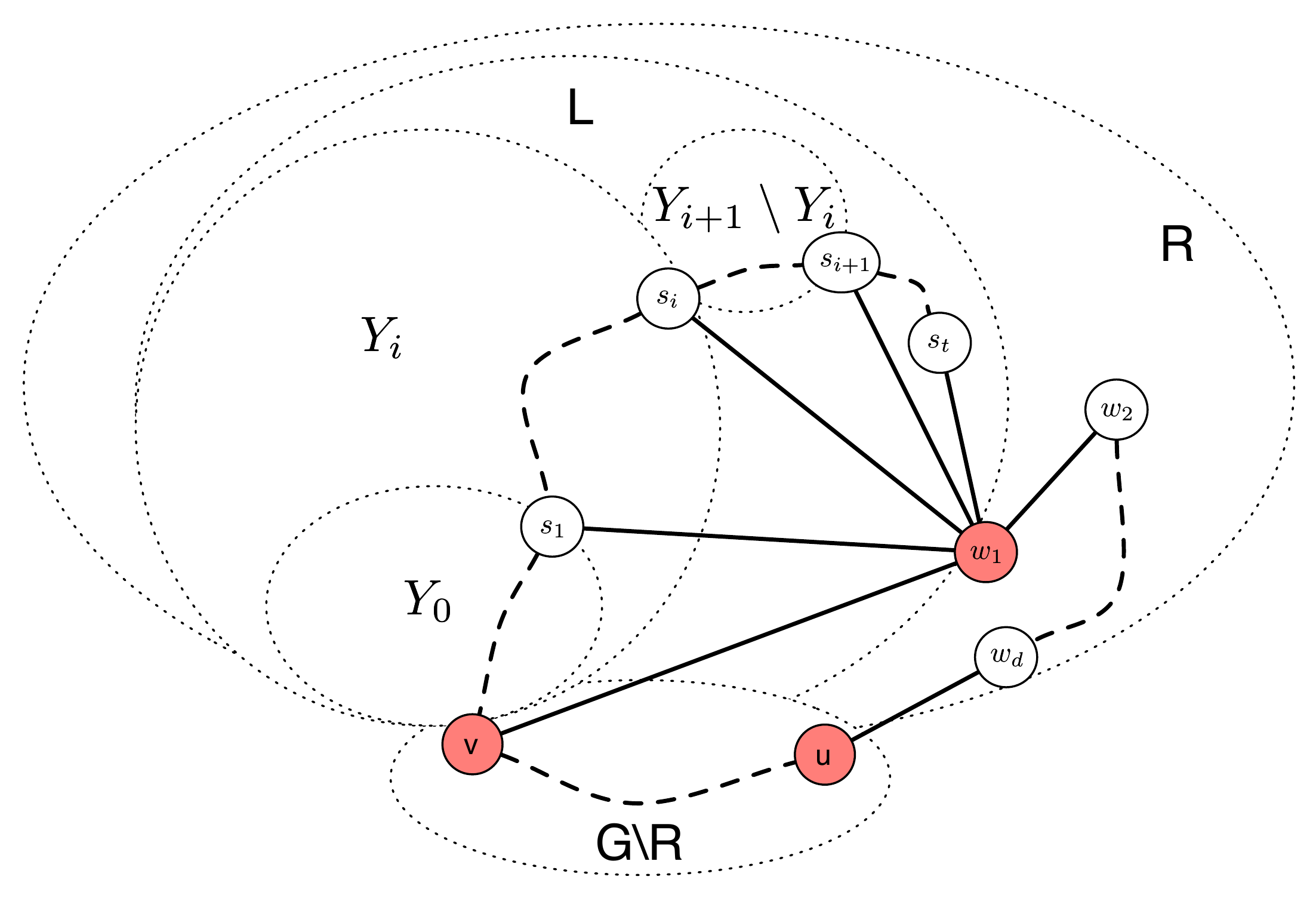}
\caption{Schematic representation of Situation $2$ in the proof of Theorem~\ref{thm:outer}. Bold lines are edges and dotted lines represent paths.}
 \label{fig:outerplanar}
\end{center}
\end{figure*}
		
		The strategy of the Cop-player proceeds as follows. First the Cop-player probes $\{v,s_1,w_1\}$ (odd turn) and then $\{u,v,w_1\}$ (even turn) and, then, for $1\leq i <t$, the Cop-player probes $\{s_i,s_{i+1},w_1\}$ (odd turns) and then $\{u,v,w_1\}$ (even turns). Note that, since $w_1$ is probed at every step, the robber can never go from $L$ to $R'$ or {\it vice versa}, i.e., if the robber is in $L$ (resp., in $R'$), he must remains in $L$ (resp., in $R'$).

		We will show the following claim which is sufficient to conclude.
		\begin{claim}
		The robber is in $L$ if and only if either $\min \{d(v,r),d(s_1,r)\} <d(w_1,r)$ (during the first odd turn), or there is an odd turn $1 \leq i <t$ such that\\ $\min \{d(s_i,r),d(s_{i+1},r)\} \leq d(w_1,r)$.
		\end{claim}

		\begin{claimproof}
		Let us start with some important observations.	
		\smallskip
		
		As already mentioned, at every even turn, $d(w_1,r)<\min\{d(u,r),d(v,r)\}$ since otherwise, the robber is located either in $L$ (in which case, we reach a Situation $2$ for $\{v,w_1\}$ and $L$) or in $R'=R \setminus (L \cup \{w_1\})$ (in which case, we reach a Situation $2$ for $\{w_1,u\}$ and $R'$). Hence, at every odd turn, $d(w_1,r) \leq d(v,r)+1$ (since the robber moves along at most one edge). 
		
		Moreover, if the robber is in $R'=R \setminus (L \cup \{w_1\})$, then at every odd turn, $d(w_1,r) \leq d(v,r)$. Indeed, let $Q$ be a shortest path from $r \in R'$ to $v$. If $Q$ does not pass through $w_1$ (otherwise it would hold that $d(v,r) > d(w_1,r)$), then it passes through $u$ and $d(v,r)>d(u,r)$. However, let $d^*(x,r)$ denote the distance between $r$ and a vertex $x$ during the previous even turn (after the probe but before the possible move of the robber), we have that $d^*(w_1,r)<d^*(u,r)$ (as mentioned above) and that $d(w_1,r)\leq d^*(w_1,r)+1$ and $d(u,r) \geq d^*(u,r)-1$ (since the robber has moved along at most one edge). Altogether $d(v,r)>d(u,r)\geq d^*(u,r)-1\geq d^*(w_1,r) \geq d(w_1,r)-1$.
		
		So, if at some odd turn $d(v,r) < d(w_1,r)$, then the robber is located in $L$ and the Situation $2$ is reached for $\{v,w_1\}$ and $L$. Let us assume that $d(v,r) \geq d(w_1,r)$ at every odd turn.

\smallskip Now, let us prove the claim by considering the odd turns one by one. 

When probing $\{v,s_1,w_1\}$, if $d(v,r)<d(w_1,r)$, then the robber is in $L$ by previous paragraph. Otherwise, if $d(s_1,r) <d(w_1,r)$, then the robber must be in $L$ (since every path from $R'$ to $s_1$ crosses $w_1$ or $v$, and $d(v,r)\geq d(w_1,r)$). On the other hand, if the robber is in $Y_1 \subseteq L$ either $d(s_1,r)<d(w_1,r)$ or $d(v,r)<d(w_1,r)$. Hence, after this probe, either the robber is located in $L$ (and we reach the Situation $2$ for $\{v,w_1\}$ and $L$) or the robber is known not to occupy a vertex in $Y_1$. 
		
		Now, let us assume by induction on $i\geq 1$ that, after the previous odd turn, it was ensured that the robber was not in $Y_i$ and that the Cop-player now probes $\{s_i,s_{i+1},w_1\}$. Note that $\{v,w_1,s_i\}$ separates $Y_i$ from the rest of the graph. 
		
		After previous odd and even turns, the robber may (moving twice) only have reached $Y_i$ by crossing $s_i$ (since if it crosses $v$ or $w_1$, it is immediately located). If such a case occurs, the robber must be at a neighbor of $s_i$ in $Y_i$ when $\{s_i,s_{i+1},w_1\}$ are probed. In that case,  $d(s_i,r)\leq d(w_1,r)<d(v,r)$ and the robber can only be in $L$. Hence,  the Situation $2$ is reached for $\{v,w_1\}$ and $L$.
		
		Now, if the robber is in $Y_{i+1} \setminus Y_i$ when $\{s_i,s_{i+1},w_1\}$ are probed, then clearly $\min\{d(s_i,r),d(s_{i+1},r)\}< d(w_1,r)$.
		
		It only remains to prove that, if $\min\{d(s_i,r),d(s_{i+1},r)\}\leq d(w_1,r)$, then the robber must be in $L$. For purpose of contradiction, let us assume that the robber is in $R'$ and $\min\{d(s_i,r),d(s_{i+1},r)\}\leq d(w_1,r)$: any shortest path from $r$ to $s_i$ or $s_{i+1}$ must pass through $v$, and hence $d(v,r)<d(r,w_1)$ which is a contradiction has shown above. 
		\end{claimproof}
		
		Therefore, after these $2(t+1)=O(n)$ rounds, either the robber has been located in $L$, in which case we are in Situation $2$ for $\{w_1,v\}$ and $L$, or the robber is known not to be in $Y_t=L$ and we reach Situation $2$ for $\{w_1,u\}$ and $R'$. 
	\end{itemize}
	In all cases, we have reached a Situation $1$ or $2$, strictly reducing the number of the possible locations of the robber.
\end{proof}

\section{Complexity}\label{sec:complexity}

In this section, we prove that the centroidal localization game (i.e., computing $\zeta^*$) is NP-hard. At first let us introduce some related properties. A set $L \subseteq V$ of vertices is called a {\it locating set} if, for every $u,v \in V \setminus L$, $N[u] \cap L \neq N[v] \cap L$. Note that a locating set must ``see" almost all vertices. Formally, for any locating set $L$, $|V \setminus N[L]| \leq 1$. Indeed, otherwise, there would be two vertices $u,v$ such that $N[u] \cap L = N[v] \cap L =\emptyset$.

In~\cite{LocalizationGame} it was proven than the localization game (i.e., computing $\zeta$) is NP-hard. At first it was proven the following
\begin{lemma}~\cite{LocalizationGame}
	Computing a minimum locating set is NP-hard in the class of graphs with diameter $2$.
\end{lemma}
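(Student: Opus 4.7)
The plan is to prove the lemma by polynomial-time reduction from a classical NP-hard problem such as \textsc{3-SAT} (the reduction from \textsc{Dominating Set} in a restricted class would also be a viable starting point). Given a 3-CNF formula $\varphi$ with variables $x_1,\dots,x_n$ and clauses $C_1,\dots,C_m$, I would construct a graph $G_\varphi$ of diameter exactly $2$ together with an integer $k$ such that $\varphi$ is satisfiable if and only if $G_\varphi$ admits a locating set of size at most $k$. The construction is polynomial in $n+m$, and the reduction's correctness is established by showing both implications.

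First, I would design a \emph{variable gadget}: for each variable $x_i$, introduce two literal-vertices $u_i^+, u_i^-$ together with private ``forcer'' attachments $a_i^+, a_i^-$ and a common auxiliary vertex $b_i$, arranged so that $a_i^+$ and $a_i^-$ can only be separated (i.e.\ have distinct closed-neighborhood traces on $L$) by putting at least one of $u_i^+, u_i^-$ into the locating set $L$. A counting argument should then force the minimum solution to include exactly one vertex per variable gadget; its choice is interpreted as the truth value of $x_i$.

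Next, I would design a \emph{clause gadget}: for each clause $C_j$, introduce a clause-vertex $c_j$ adjacent to the three literal-vertices appearing in $C_j$, plus a pair of twin-like helpers $c_j',c_j''$ whose only discriminating neighbors are those three literal-vertices. Separating $c_j'$ from $c_j''$ then forces at least one literal-vertex of $C_j$ to belong to $L$, which, combined with the variable-gadget choices, encodes satisfaction of $C_j$. Finally, I would set $k := n + c_0$, where $c_0$ is the mandatory contribution of the auxiliary gadgets, and verify: (i) a satisfying assignment yields a locating set of size $k$; and (ii) any locating set of size $\leq k$ must pick exactly one literal per variable and at least one satisfied literal per clause, from which a satisfying assignment is read off.

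To enforce diameter $2$, I would not use a naive single universal vertex (which would collapse too many traces on $L$), but instead attach a small ``connector'' structure, for instance a pair of true twin connectors adjacent to every constructed vertex, so that any two non-selected vertices have a common neighbor but no connector is forced into the optimum beyond a controlled amount. The main obstacle will be precisely this tension: adding enough edges to shrink the diameter to $2$ tends to make $N[u]\cap L$ coincide with $N[v]\cap L$ for many pairs $(u,v)$, destroying the gadget's separation semantics. Resolving this requires introducing several twin-like pairs whose only discriminators are the intended gadget vertices, and a careful case analysis verifying that every pair outside $L$ is separated if and only if the encoding corresponds to a satisfying assignment.
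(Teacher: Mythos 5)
You should first be aware that the paper does not prove this lemma at all: it is imported verbatim from \cite{LocalizationGame}, so there is no in-paper argument to compare yours against. Judged on its own terms, your proposal is a plan rather than a proof, and the gap is substantive. The variable and clause gadgets are specified only by the property you want them to have (``arranged so that $a_i^+$ and $a_i^-$ can only be separated by putting at least one of $u_i^+,u_i^-$ into $L$''), not by an explicit vertex and edge set; the ``counting argument'' forcing one literal per variable, the value of the budget $k=n+c_0$, and both directions of the equivalence are all deferred to future work. Since the entire content of an NP-hardness proof is the explicit construction together with the verification of both implications, nothing here can actually be checked; and reductions for locating/identifying-type problems are notorious for failing precisely at the omitted case analysis (e.g.\ two helper vertices sitting in different gadgets accidentally receiving identical traces $N[\cdot]\cap L$, or the diameter-reducing edges creating unintended separations that let a cheaper, non-assignment-like solution through).

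Moreover, the one place where you do commit to a concrete technical claim is mistaken, and it concerns the step you identify as the main obstacle. The locating-set condition used here is purely a closed-neighborhood condition: $L$ is locating iff $N[u]\cap L\neq N[v]\cap L$ for all $u,v\notin L$; distances play no role. Adding a single universal vertex $w$ adds the same element to every closed neighborhood, hence either adds $w$ to every trace $N[\cdot]\cap L$ (if $w\in L$) or changes no trace at all (if $w\notin L$); it therefore collapses no pair of traces among the original vertices. The only new obligation is to separate $w$ itself from the rest, i.e.\ to guarantee that no original vertex outside $L$ dominates all of $L$, which perturbs the optimum by a bounded and controllable amount. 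So the ``naive single universal vertex'' you reject is in fact the standard, essentially painless way to force diameter $2$ --- it is exactly the device used later in Section~\ref{sec:complexity}, where $G'$ is obtained from $G$ by adding universal vertices $x_1,\dots,x_{n+1}$. The genuine difficulty of the lemma lies entirely in the SAT gadgets you have left unspecified, not in the diameter reduction; as written, the proposal does not yet constitute a proof.
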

Then, for any $n$-node graph $G$ with diameter $2$, a graph $G'$ was constructed by adding $n+1$ pairwise non-adjacent vertices $x_1,\cdots,x_{n+1}$, each of them being adjacent to every vertex of $V(G)$. Note that, because $G$ has diameter $2$, then $G$ is an isometric subgraph of $G'$ (i.e., distances are preserved). Finally it was proven that $\zeta(G') = k+1$, where $k$ is the minimum size of a locating set of $G$. 

Since $\zeta(G')\leq \zeta^*(G')$, it only remains to prove that $\zeta^*(G') \le k+1$. However, in~\cite{LocalizationGame}, the proof that $\zeta(G') \le k+1$ only relies on the relative order of the distances and not on the exact distances. Therefore, the proof in~\cite{LocalizationGame} actually shows that  $\zeta^*(G') \le k+1$.

It follows that:
\begin{theorem}
	The centroidal localization game is NP-hard.
\end{theorem}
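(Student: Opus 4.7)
The plan is to reduce from the problem of computing a minimum locating set in graphs of diameter~$2$, which is stated to be NP-hard in the cited lemma. The reduction is exactly the one already used for the (metric) localization game: starting from an $n$-node diameter-$2$ graph $G$, build $G'$ by adding $n+1$ mutually non-adjacent apex vertices $x_1,\ldots,x_{n+1}$, each adjacent to all of $V(G)$. This $G'$ is polynomial in $G$ and contains $G$ as an isometric subgraph, so knowing $\zeta^*(G')$ lets us recover the minimum locating number $k$ of $G$ (which is enough for NP-hardness). What the theorem really needs is the identity $\zeta^*(G')=k+1$.

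For the lower bound I would invoke the inequality $\zeta(G')\le \zeta^*(G')$, which is immediate from the definitions (exact distances carry at least as much information as their relative order). Combined with the prior paper's result $\zeta(G')\ge k+1$, this yields $\zeta^*(G')\ge k+1$ with no extra work. Here the robber's winning strategy against fewer than $k+1$ probes already uses only the relative-order response, so nothing subtle needs to be rechecked.

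For the upper bound I would exhibit a cop strategy using $k+1$ probes per turn and argue $\zeta^*(G')\le k+1$ by re-reading the strategy designed in~\cite{LocalizationGame}. The key step is to verify, by inspection, that every decision the Cop-player makes in that strategy depends only on the order (and ties) of the reported distances, not on their numeric values. Concretely, the argument there relies on facts like ``a probed apex vertex $x_i$ is at distance $1$ or $2$ from the robber, and is at distance $1$ iff the robber sits in $V(G)$'' and ``a vertex of a locating set detects the robber iff its reported distance is the minimum'', both of which are expressible purely in terms of the ranking of distances among the probed vertices. Once this is observed, the same sequence of probes wins in the centroidal game, so $\zeta^*(G')\le k+1$.

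The two bounds together give $\zeta^*(G')=k+1$, which turns a polynomial-time algorithm for $\zeta^*$ into one for minimum locating set on diameter-$2$ graphs, contradicting the cited lemma. The only nontrivial step is the bookkeeping of the previous paper's strategy to confirm the relative-order-only property; this is where care is needed, but no new combinatorial idea is required beyond what is already developed in~\cite{LocalizationGame}.
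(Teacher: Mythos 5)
Your proposal is correct and follows essentially the same route as the paper: the same reduction via $G'$, the lower bound $\zeta^*(G')\ge \zeta(G')\ge k+1$ from the definitions, and the upper bound obtained by observing that the cop strategy of~\cite{LocalizationGame} uses only the relative order of distances. No substantive difference from the paper's argument.
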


\section{Euclidean plane}\label{sec:euclidean}

In this section, we study the centroidal localization game on the infinite graph with a vertex in every point of the Euclidean plane and edges between points at distance at most 1.
 In such a graph, the graph distance between a vertex and the robber is  the ceiling of their Euclidean distance. We will show that the Cop-player can locate the robber with an error of at most $2\sqrt{2}+\varepsilon$ by probing only two vertices in each round.
 
First, let us consider the possible three results of a single round. Firstly, the Cop-player may probe the vertex in which the robber stays, in which case the Cop-player wins immediately. Secondly, the Cop-player may probe two vertices such that their graph distances from the robber are the same. In this case the Cop-player could conclude that the robber is within a strap of width at most 2 (see Figure~\ref{pasek}). Last, the Cop-player may probe two vertices such that their distances to the robber differ. In this case, the Cop-player can draw a line between those two vertices (consisting of all points of the plane with equal distance from the two chosen vertices) and say that the robber must be on one side of the ``bounding" line - the side with vertex with smaller distance from the robber (see Figure \ref{strona}).
\begin{figure}[ht]
\center
\includegraphics[width=0.45\textwidth]{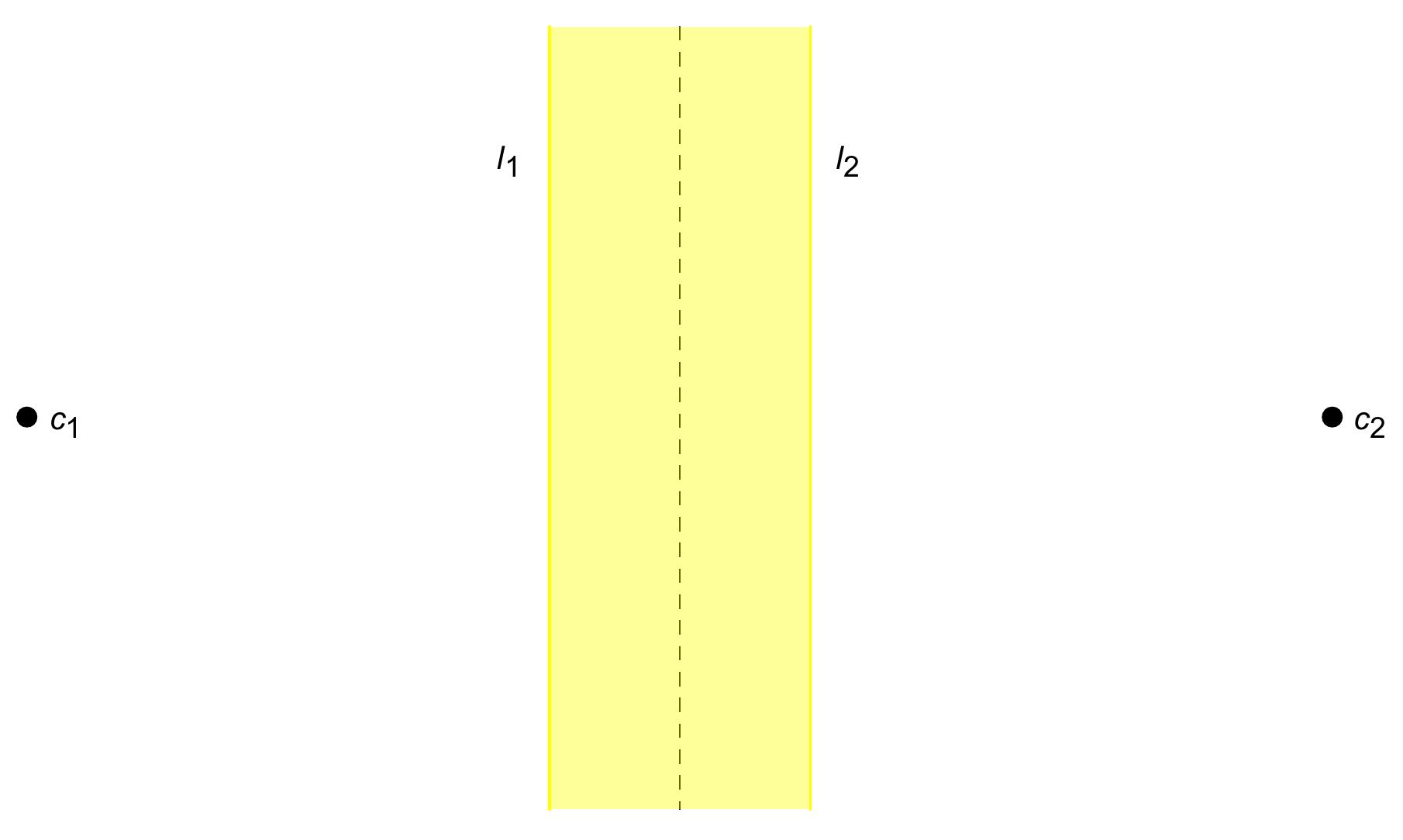}
\caption{Location of the robber when distances are the same}\label{pasek}
\end{figure}
\begin{figure}[ht]
\center
\includegraphics[width=0.45\textwidth]{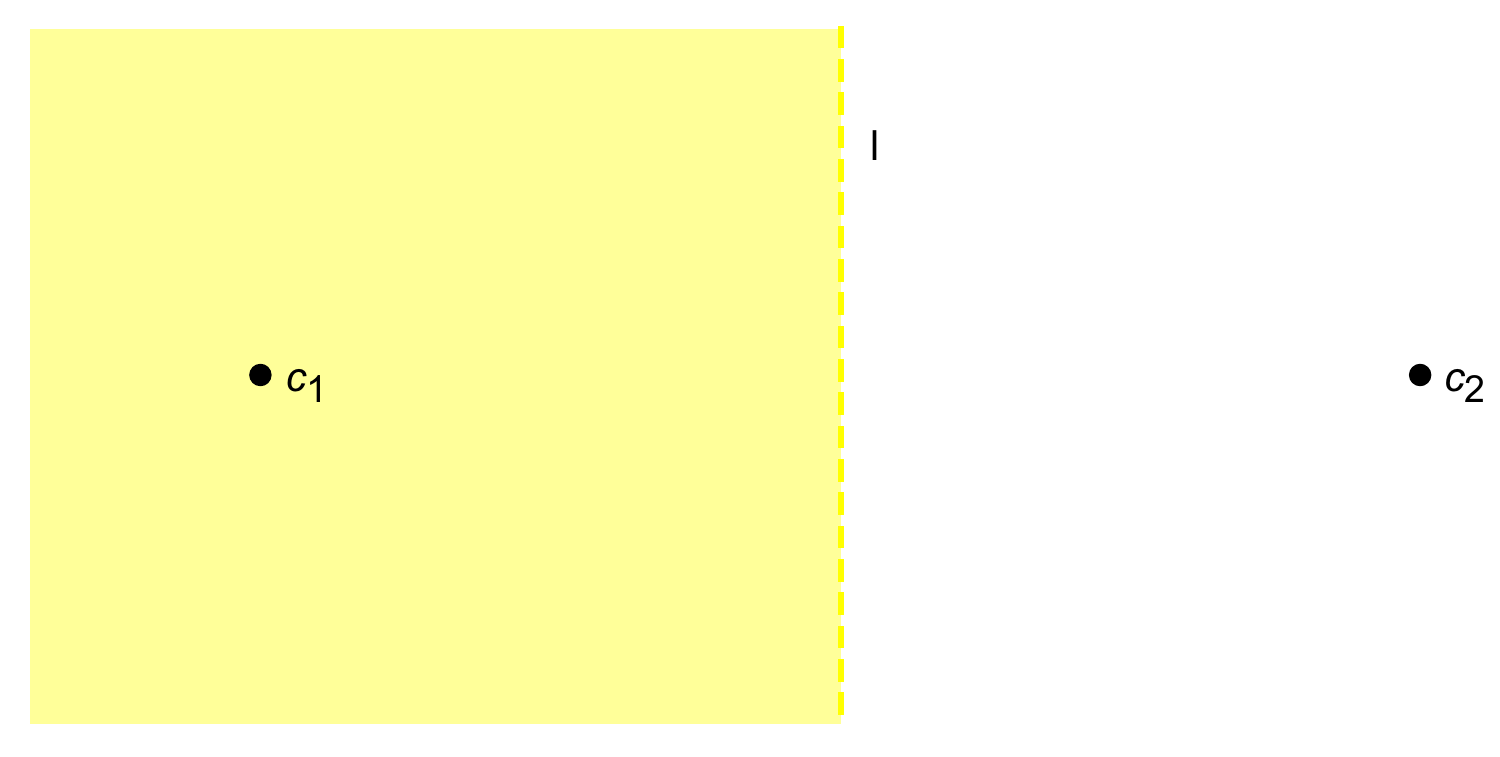}
\caption{Location of the robber when distances differ}\label{strona}
\end{figure}

\begin{lemma}
There exists a strategy allowing the Cop-player to determine a rectangle containing the robber, in a finite number of turns and probing two vertices each time.
\end{lemma}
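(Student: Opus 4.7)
The plan is an exponential-search (``doubling'') strategy in two phases, one for each coordinate. I will freely use the trichotomy stated just before the lemma: each probe of a pair $\{v_1,v_2\}$ yields either (a) an immediate capture, (b) a strip of width at most $2$ about the perpendicular bisector of $v_1v_2$ (when the two reported graph-distances are equal), or (c) an open half-plane on the $v_i$-side of that bisector (when they differ). Since the graph distance is the ceiling of the Euclidean distance, strict inequality of the ceilings forces strict Euclidean inequality, so in case (c) the half-plane really does separate the robber from the bisector.

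In Phase 1, which bounds the horizontal coordinate, at round $k=1,2,\ldots$ the Cop-player probes the pair $\mathbf{0}=(0,0)$ and $(\sigma_k 2^k,0)$, where $\sigma_k\in\{+1,-1\}$ is chosen as the side of the $y$-axis on which the robber was last indicated to lie. The perpendicular bisector is then the vertical line $x=\sigma_k 2^{k-1}$, and we keep doubling as long as the response places the robber strictly \emph{beyond} this bisector. Because the robber starts at a finite point $(x_0,y_0)$ and changes his $x$-coordinate by at most one unit per turn, at round $k$ his $x$-coordinate is at most $|x_0|+k$, which eventually fails to exceed $2^{k-1}$. Hence after $O(\log|x_0|)$ rounds the response must flip (or return a strip, or a win), and combining the last two responses -- with a one-unit allowance for the intervening move -- pins the robber to a finite interval $a\le x\le b$ of length $O(2^k)$.

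Phase 2 repeats the identical doubling search along the other axis, probing $\mathbf{0}$ together with $(0,\sigma_k 2^k)$, whose bisectors are horizontal lines. After at most $O(\log|y_0|)$ additional rounds we obtain a finite vertical strip $c\le y\le d$. During these $k'$ further rounds the horizontal bound from Phase 1 degrades by at most one unit per turn, so it widens only to the still-finite strip $a-k'\le x\le b+k'$. Intersecting the two strips yields a finite rectangle containing the robber at the final turn, as required; the total number of turns is $O(\log|x_0|+\log|y_0|)$.

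The main obstacle is simply reconciling the two rates at play: the bisector shifts by a geometric factor each round, whereas the robber can only advance by a single unit; as long as the doubling factor is at least two, the exponential search is guaranteed to terminate, and the constant off-by-one corrections introduced by the ceiling in the graph-distance metric are absorbed harmlessly into the doubling constant. A minor case-analysis is needed when a probe returns an equal-distance strip rather than a half-plane, but that outcome is strictly stronger than a half-plane bound and can be handled by immediately concluding the current phase.
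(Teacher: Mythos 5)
Your argument is correct and rests on the same engine as the paper's proof: the perpendicular bisector of the probed pair is made to advance strictly faster than the robber can drift, so the reported comparison must eventually flip, and the flip (with a one-unit allowance for the intervening move) traps the coordinate between two consecutive bisectors; you then repeat on the other axis while relaxing the first pair of bounds by one unit per round, exactly as the paper does. The difference is the search schedule. The paper keeps the two probes at fixed separation and translates the pair by a constant $\Delta>1$ each round, so the bisector advances arithmetically, the number of rounds is linear in the robber's initial distance, and the two bounding lines $l_1,l_2$ are produced one after the other as the sweep passes the robber. You instead anchor one probe at the origin and double the other's offset, so the bisector advances geometrically and each phase ends after $O(\log|x_0|)$ rounds --- a genuine quantitative improvement, though the lemma only asks for finiteness. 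Two points to tighten. First, a response never tells you which side of the $y$-axis the robber lies on, only which side of the current bisector $x=\sigma_k 2^{k-1}$; in particular, if the very first probe reports the robber on the origin side you obtain an upper bound but no lower bound and there are no ``last two responses'' to combine, so you must explicitly restart the doubling in the opposite direction (and, if that too immediately reports the origin side, combine the two one-sided bounds from the two directions). Second, the ``strip of width at most $2$'' in the equal-distance case is really the region between two branches of a hyperbola, whose horizontal extent is only $O(1)$ when the orthogonal coordinate is comparable to the probe separation; this imprecision is inherited from the paper's preamble and affects its own proof equally, so it is not a defect specific to your approach.
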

\begin{proof}
For the convenience of the proof, let us denote the vertices chosen by the Cop-player as $c_1$ and $c_2$, and their graph distances from the robber as $d_1$ and $d_2$. Let $\Delta>1$ be any constant. We will show that, in a finite number of rounds, the Cop-player can find a rectangle, bounding the location of the robber, by first bounding his location by two vertical lines, and then by two horizontal lines.

To bound the location of the robber by two vertical lines the Cop-player will choose $c_1$ and $c_2$ from $x$ axis (let $c_1$ have smaller $x$ coordinate than $c_2$). Let us denote by $l_1$ and $l_2$ the lines bounding location of the robber. The strategy is as follows:
\begin{itemize}
\item if both $l_1$ and $l_2$ are already set, then move $l_1$ to the left by 1, and move $l_2$ to the right by 1, and end the procedure;
\item if $d_1=d_2$ then location of the robber is now bounded by the two vertical lines (the robber is within a strap of width at most 2 (see Figure~\ref{pasek}));
\item if $d_1<d_2$ then set $l_2$ to be the bounding line created by all points of equal Euclidean distance from both $c_1$ and $c_2$, and in the next round choose $c_1-(\Delta,0)$ and $c_2-(\Delta,0)$ (see Figure \ref{pl1}). In case $l_1$ is already set, shift it to the left by one;
\item if $d_1>d_2$ then set $l_1$ to be the bounding line created by all points of equal Euclidean distance from both $c_1$ and $c_2$, and in the next round choose $c_1+(\Delta,0)$ and $c_2+(\Delta,0)$. In case $l_2$ is already set, shift it to the right by one;
\end{itemize}
\begin{figure}[h]
\center
\includegraphics[width=0.6\textwidth]{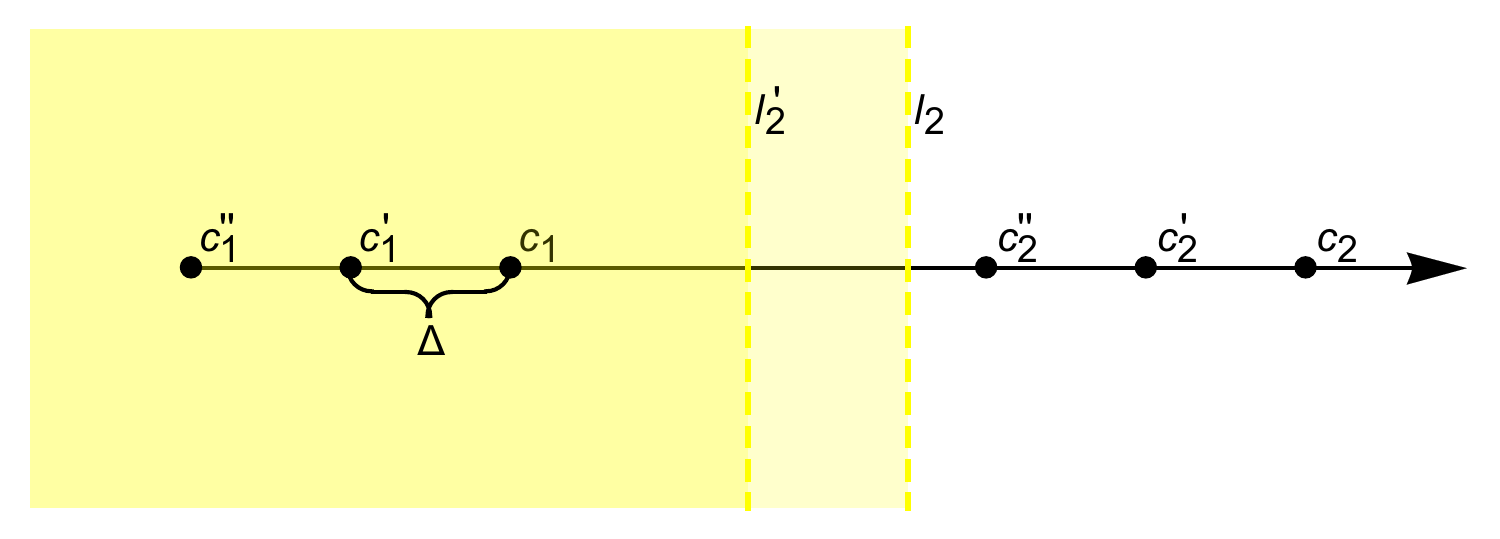}
\caption{Strategy of Cop, when $d_1<d_2$}\label{pl1}
\end{figure}
Note that this procedure will end, since $\Delta$ is strictly bigger than 1. Indeed, let us assume for purpose of contradiction that the strategy always consider the case when $d_1<d_2$ (the case $d_1>d_2$ is symmetric). Then, at each round, our procedure moves the bounding line $l_2$ to the left by $\Delta>1$, while the robber moves by at most 1 to the left. Hence, at some point, the bounding line $l_2$ must be further to the left than the robber, which means that the procedure has  reached  the case $d_1>d_2$ (and now, the second line $l_1$ will be defined). 
At the end of the procedure, we move lines $l_1$ and $l_2$ by one (first item above) since one of them bounded location of the robber in previous round.

When location of the robber is bounded by two vertical lines $l_1$ and $l_2$, then the Cop-player can repeat the bounding procedure with choosing $c_1$ and $c_2$ from $y$ axis and thus bounding the robbers location by two horizontal lines, with an extra operation of moving the two bounding vertical lines by one at each round ($l_1$ to the left and $l_2$ to the right).
\end{proof}

\begin{lemma}
For any $\varepsilon>0$, in finite number of rounds with choosing 2 vertices, the Cop-player may show a point $P$ such that the distance between the robber and $P$ is at most $3\sqrt{2}+\varepsilon$.
\end{lemma}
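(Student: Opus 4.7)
The plan is to iteratively shrink the bounding rectangle produced by the previous lemma until its half-diagonal is at most $3\sqrt{2}+\varepsilon$, at which point the Cop-player can simply output the center of the rectangle as the point $P$. After applying the previous lemma to obtain in finitely many rounds a rectangle $R$ containing the robber, the Cop-player enters a shrinking phase in which every round she probes two vertices $c_1,c_2$ whose perpendicular bisector $\ell$ cuts across $R$. Depending on the response, she can intersect $R$ with the half-plane of $\ell$ on the $c_1$-side (if $d_1<d_2$), the $c_2$-side (if $d_1>d_2$), or with the strap of width at most $2$ around $\ell$ (if $d_1=d_2$). Between probes, the robber may move up to one Euclidean unit, so the axis-aligned bounding box of the updated region grows by at most $1$ on each side, i.e.\ each dimension grows by at most $2$.

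In the shrinking phase, I would alternate between vertical bisectors (obtained by probing vertices differing only in the $x$-coordinate) and horizontal bisectors (differing only in $y$), each placed so that it splits $R$ roughly along the middle of the chosen axis. Over two consecutive rounds this gives a recursion of the form $(a,b)\mapsto (a/2+\gamma_a,\,b/2+\gamma_b)$ on the dimensions of $R$, where $\gamma_a,\gamma_b$ are constants combining the robber's motion and the precise placement of the bisector. The recursion is a strict contraction and hence has a bounded fixed point $(a^*,b^*)$; by running the alternation long enough, the Cop-player drives $(a,b)$ within any positive tolerance of $(a^*,b^*)$. Once $\sqrt{a^2+b^2}/2\le 3\sqrt{2}+\varepsilon$, she announces the center of $R$ as $P$, and since $R$ still contains the robber, the distance from $P$ to the robber is at most the half-diagonal of $R$, giving the bound.

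The delicate point is tuning the probing strategy so that the asymptotic dimensions $(a^*,b^*)$ actually satisfy $\sqrt{(a^*)^2+(b^*)^2}/2\le 3\sqrt{2}$, i.e.\ essentially that $R$ contracts to no worse than a $6\times6$ square. A straightforward alternating halving produces a fixed point closer to an $8\times 6$ rectangle with half-diagonal $5$, which is too large. To close the gap I would exploit the $d_1=d_2$ response (which cuts $R$ by a strap of width $2$ and can shrink a dimension by more than a factor of two in one shot), use inward shifts of magnitude $\Delta>1$ as in the previous lemma to keep the bounding lines contracting faster than the robber can escape, and possibly unbalance the schedule of $x$- versus $y$-probes to match their differing shrink and growth rates. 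This careful balancing, together with a clean accounting of the $\varepsilon$ slack absorbed from each stage (the approximate halving, the discrete shift $\Delta$, and the stopping criterion), is the main technical obstacle of the proof.
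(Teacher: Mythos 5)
Your overall plan is the same as the paper's: use the previous lemma to get an initial bounding rectangle, then alternate probes of two points on a horizontal line (to halve the $x$-extent) and two points on a vertical line (to halve the $y$-extent), track the growth-by-movement versus shrink-by-halving recursion, and output the center of the limiting rectangle. However, as written your proof is incomplete: you stop at what you call ``the main technical obstacle,'' namely your claim that straightforward alternating halving only contracts the region to roughly an $8\times 6$ rectangle of half-diagonal $5$, and you only sketch unproven remedies (exploiting the $d_1=d_2$ strap, reusing the $\Delta>1$ shifts, unbalancing the schedule). None of those extra devices is needed, and leaving the quantitative step unresolved is a genuine gap.

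The obstacle dissolves once you fix the moment at which the rectangle is measured. Let $A_n$ be the bounding region \emph{after} the $n$-th probe's answer has been incorporated, and $A_n'$ its $1$-neighborhood (accounting for the robber's move before the next probe). One two-round cycle gives $w_{n+2}\le w_n/2+3$ and $h_{n+2}\le h_n/2+3$ (each dimension grows by at most $2$ per round and is halved once per cycle), and the fixed point of $a\mapsto a/2+3$ is $6$. Hence for $n$ large enough $A_n$ is contained in a $(6+\varepsilon')\times(6+\varepsilon')$ square, whose half-diagonal is $\tfrac{(6+\varepsilon')\sqrt2}{2}\le 3\sqrt2+\varepsilon$; in fact, immediately after a probe the just-halved dimension is only about $4$, so $A_n$ is roughly $6\times 4$. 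Your $8\times 6$ figure arises from measuring the region \emph{before} the probe (equivalently, allowing one extra robber move after the announcement); since $P$ is announced on the strength of the probe just answered, the relevant region is $A_n$, not $A_n'$. With this accounting the plain alternating strategy already yields the claimed bound, and the rest of your argument goes through.
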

\begin{proof}
For the convenience of the proof, let us denote the two vertices chosen by the Cop-player as $c_1$ and $c_2$, and their distances to the robber as $d_1$ and $d_2$.
By the previous lemma, we know that, in a finite number of rounds, the Cop-player can bound the area where the robber is located by a rectangle $A_0$. After the next move the robber will be located in $A_0'=\{p\in\mathbb{R}^2:\ \exists z\in A_0\ dist(p,z)\leq 1\}$. At every round, let us denote the minimal and maximal $x$ coordinates of the bounding area by $x_1$ and $x_2$, and the minimal and maximal $y$ coordinates of the bounding area by $y_1$ and $y_2$. The strategy for the Cop-player is as follows.

Probe $c_1=(x_1,0)$ and $c_2=(x_2,0)$. If $d_1=d_2$, then the robber is in the intersection of the resulting stripe of width at most 2 and the area $A_0'$. Otherwise the Cop-player knows that the robber is either on the ``left" or ``right" side of the vertical line with $x$ coordinate equal to $(x_1+x_2)/2$. Hence, he is in one half of the previous bounding area. Let $A_1$ denote the new bounding area of the current location of the robber.
Then, in the next round, the robber is in the area $A_1'=\{p\in\mathbb{R}^2:\ \exists z\in A_1\ dist(p,z)\leq 1\}$.

In the next round choose $c_1=(0,y_1)$ and $c_2=(0,y_2)$. If $d_1=d_2$, then the robber is in the intersection of the resulting stripe of height at most 2 and the area $A_1'$. Otherwise, the Cop-player knows that the robber is either on the ``upper" or ``lower" side of a horizontal line with $y$ coordinate equal to $(y_1+y_2)/2$.  Hence, he is in one half of the previous bounding area. Let us denote by $A_2$ the bounding area of current location of the robber.
Then, in the next round, the robber is in bounding area $A_2'=\{p\in\mathbb{R}^2:\ \exists z\in A_2\ dist(p,z)\leq 1\}$.

The strategy simply repeats these two steps. Let us denote the maximal widths of the bounding areas $A_n$, $A_n'$ as $w_n, w_n'$, respectively. Notice that for even $n$ we have: $w_n'=w_n+2$, $w_{n+1}=w_n'/2$, $w_{n+1}'=w_{n+1}+2$ and $w_{n+2}=w_{n+1}'$ (unless $d_1=d_2$ in step $n+1$, in which case $w_{n+1}=2$ and $w_{n+2}=w_{n+1}'=4$). Hence $1\leq w_{n+2}\leq w_n/2 +3$. By similar analysis for odd values of $n$, we again  obtain $1\leq w_{n+2}\leq w_n/2 +3$. Analogously for all $n$ we get $1\leq h_{n+2}\leq h_n/2 +3$, where $h_n$ denotes the maximal height of $A_n$.
 Note that the limit of the sequence satisfying the recursive formula $a_{n+2}=a_n/2+3$ is $6$. Therefore, for any $\varepsilon$, there exists an integer $n$ such that $w_n\leq 6+\varepsilon$ and $h_n\leq 6+\varepsilon$. Hence, the bounding area $A_n$ is contained in a rectangle with side lengths at most $6+\varepsilon$. So, the distance between every point of $A_n$ and the point $P=(\frac{x_2-x_1}{2},\frac{y_2-y_1}{2})$ is at most $\frac{(6+\varepsilon)\sqrt{2}}{2}\leq 3\sqrt{2}+\varepsilon$.
\end{proof}

\section{Further Work}\label{sec:conclusion}

There are many interesting questions, yet to be asked, about Centroidal Coding Game. One would be to find bounds on $\zeta^*(G)$ for special classes of graphs (e.g., hypercube, partial cubes, chordal graphs, disk intersection graphs etc.). In particular: is the bound given by Theorem~\ref{thm:outer} tight?
One can notice without any effort that $\zeta(C_4) = \zeta^*(C_4) = 2$, but are there outerplanar graphs which require 3 cops?

Another problem worth to be considered is the game without the \emph{immediate catch} rule.  That is, a variant of the game where the robber could go into checked vertex unnoticed. It is easy to see that, in case of the Euclidean plane, the game does not change. For trees, let us note that the Cop-player can win such a game by probing at most $\Delta(T)$ vertices in each round (by checking all neighbors of a vertex we can tell in which subtree the robber is).

Finally, we ask what happens if the Cop-player is given a set of vertices $S\subset V(G)$ such that she can only choose vertices from this set, meaning in each round $C\subset S$. Such approach might be useful for practical applications. 

The question of the computational complexity of $\zeta^*$ in various graph classes such as bipartite graphs,  bounded treewidth graphs... is also of interest. Finally, most of the interesting turn-by-turn two-player games are known to be PSPACE-hard or even EXPTIME-complete. The exact status of the complexity of the centroidal localization game is still open.

\begin{small}
\bibliographystyle{plain}
\bibliography{CentroidalGame}
\end{small}
\end{document}